\newcounter{are-there-sections}
\def\me{S\'ANDOR J KOV\'ACS\xspace}
\def\mythanks{Supported in part by NSF Grant 
 DMS-0856185, and the
Craig McKibben and Sarah Merner Endowed Professorship in Mathematics at the
University of Washington.}
\def\myaddress{University of Washington, Department of Mathematics, Box 354350,
Seattle, WA 98195-4350, USA}
\def\myemail{skovacs@uw.edu\xspace}
\def\myurladdr{http://www.math.washington.edu/$\sim$kovacs\xspace}
\DeclareMathAlphabet{\smallchanc}{OT1}{pzc}%
                                 {m}{it}
\DeclareFontFamily{OT1}{pzc}{}
\DeclareFontShape{OT1}{pzc}{m}{it}%
             {<-> s * [1.100] pzcmi7t}{}
\DeclareMathAlphabet{\mathchanc}{OT1}{pzc}%
                                 {m}{it}
\newcommand{\mcL}{\mathchanc{L}}
\newcommand{\mcR}{\mathchanc{R}}
\DeclareFontFamily{OMS}{rsfs}{\skewchar\font'60}
\DeclareFontShape{OMS}{rsfs}{m}{n}{<-5>rsfs5 <5-7>rsfs7 <7->rsfs10 }{}
\DeclareSymbolFont{rsfs}{OMS}{rsfs}{m}{n}
\DeclareSymbolFontAlphabet{\scr}{rsfs}
\newcommand{\sD}{\scr{D}}
\newcommand{\sF}{\scr{F}}
\newcommand{\sG}{\scr{G}}
\newcommand{\sH}{\scr{H}}
\newcommand{\sI}{\scr{I}}
\newcommand{\sO}{\scr{O}}
\newcommand{\sS}{\scr{S}}
\newcommand{\sfA}{{\sf A}}
\newcommand{\sfB}{{\sf B}}
\newcommand{\bC}{\mathbb{C}}
\newcommand{\bH}{\mathbb{H}}
\newcommand{\bQ}{\mathbb{Q}}
\newcommand{\ol}{\overline}
\newcommand{\into}{\hookrightarrow}
\newcommand{\onto}{\twoheadrightarrow}
\newcommand{\isom}{\overset{\simeq\ }\longrightarrow}
\newcommand{\leteq}{\colon\!\!\!=}
\newcommand{\col}{\colon}
\newcommand{\sch}{\ensuremath{{\sf{Sch}}}\xspace}
\newcommand{\rpforward}[1]{\myR{#1}_*}
\DeclareMathOperator{\exc}{Exc}
\DeclareMathOperator{\id}{{id}}
\DeclareMathOperator{\Ob}{{Ob}}
\DeclareMathOperator{\ob}{{Ob}}
\DeclareMathOperator{\Sing}{{Sing}}
\DeclareMathOperator{\supp}{{supp}}
\newcommand{\factor}[2]{\left. \raise 2pt\hbox{\ensuremath{#1}} \right/
        \hskip -2pt\raise -2pt\hbox{\ensuremath{#2}}}
\newcommand{\disjoint}{\overset{_\kdot}\cup}
\newcommand{\myR}{{\mcR\!}}
\newcommand{\myL}{{\mcL}}
\newcommand{\blank}{\underline{\hskip 10pt}}
\newcommand{\kdot}{{{\,\begin{picture}(1,1)(-1,-2)\circle*{2}\end{picture}\ }}}
\newcommand{\mydot}{\kdot}
\newcommand{\cx}{\sf}
\newcommand{\DuBois}[1]{{\underline \Omega {}^0_{#1}}}
\newcommand{\Om}{\underline{\Omega}}
\def\coh#1.#2.#3.{H^{#1}(#2,#3)}
\def\dimcoh#1.#2.#3.{h^{#1}(#2,#3)}
\def\hypcoh#1.#2.#3.{\mathbb H_{\vphantom{l}}^{#1}(#2,#3)}
\def\loccoh#1.#2.#3.#4.{H^{#1}_{#2}(#3,#4)}
\def\dimloccoh#1.#2.#3.#4.{h^{#1}_{#2}(#3,#4)}
\def\lochypcoh#1.#2.#3.#4.{\mathbb H^{#1}_{#2}(#3,#4)}
\def\ses#1.#2.#3.{0  \longrightarrow  #1   \longrightarrow 
 #2 \longrightarrow #3 \longrightarrow 0} 
\def\sesshort#1.#2.#3.{0
 \rightarrow #1 \rightarrow #2 \rightarrow #3 \rightarrow 0}
\def\dist#1.#2.#3.{  #1   \longrightarrow 
 #2 \longrightarrow #3 \stackrel{+1}{\longrightarrow} } 
\def\CDdist#1.#2.#3.{  #1   @>>>  #2  @>>>   #3 @>+1>> }  
\def\shortses#1.#2.#3.{0  \rightarrow  #1   \rightarrow 
 #2  \rightarrow   #3 \rightarrow  0}
\def\shortdist#1.#2.#3.{  #1   \rightarrow 
 #2  \rightarrow   #3 \stackrel{+1}{\rightarrow} }  
\def\ddist#1.#2.#3.#4.#5.#6.{\CD
#1 @>>> #2 @>>> #3 @>+1>> \\
@VVV @VVV @VVV \\
#4 @>>> #5 @>>> #6 @>+1>> 
\endCD}
\def\ddistun#1.#2.#3.#4.#5.#6.{\CD
#1 @>>> #2 @>>> #3 @>+1>> \\
@. @VVV @VVV  \\
#4 @>>> #5 @>>> #6 @>+1>> 
\endCD}
\def\Iff#1#2#3{
\hfil\hbox{\hsize =#1
\vtop{\noin #2}
\hskip.5cm 
\lower.5\baselineskip\hbox{$\Leftrightarrow$}\hskip.5cm
\vtop{\noin #3}}\hfil\medskip}
\newcommand{\union}\cup
\newcommand{\intersect}\cap
\newcommand{\Union}\bigcup
\newcommand{\Intersect}\bigcap
\def\myoplus#1.#2.{\underset #1 \to {\overset #2 \to \oplus}}
\def\qis{\,{\simeq}_{\text{qis}}\,}
\begin{document}
\makeatletter
\newenvironment{refmr}{}{}
\renewcommand{\labelenumi}{{\rm (\thethm.\arabic{enumi})}}
%
\setitemize[1]{leftmargin=*,parsep=0em,itemsep=0.125em,topsep=0.125em}
\newcommand\james{M\raise .575ex \hbox{\text{c}}Kernan}

\renewcommand\thesubsection{\thesection.\Alph{subsection}}
\renewcommand\subsection{
  \renewcommand{\sfdefault}{pag}
  \@startsection{subsection}%
  {2}{0pt}{-\baselineskip}{.2\baselineskip}{\raggedright
    \sffamily\itshape\small
  }}
\renewcommand\section{
  \renewcommand{\sfdefault}{phv}
  \@startsection{section} %
  {1}{0pt}{\baselineskip}{.2\baselineskip}{\centering
    \sffamily
    \scshape
}}
\newcounter{lastyear}\setcounter{lastyear}{\the\year}
\addtocounter{lastyear}{-1}
\newcommand\sideremark[1]{%
\normalmarginpar
\marginpar
[
\hskip .45in
\begin{minipage}{.75in}
\tiny #1
\end{minipage}
]
{
\hskip -.075in
\begin{minipage}{.75in}
\tiny #1
\end{minipage}
}}
\newcommand\rsideremark[1]{
\reversemarginpar
\marginpar
[
\hskip .45in
\begin{minipage}{.75in}
\tiny #1
\end{minipage}
]
{
\hskip -.075in
\begin{minipage}{.75in}
\tiny #1
\end{minipage}
}}
\newcommand\Index[1]{{#1}\index{#1}}
\newcommand\inddef[1]{\emph{#1}\index{#1}}
\newcommand\noin{\noindent}
\newcommand\hugeskip{\bigskip\bigskip\bigskip}
\newcommand\smc{\sc}
\newcommand\dsize{\displaystyle}
\newcommand\sh{\subheading}
\newcommand\nl{\newline}
\newcommand\input /home/kovacs/tex/latex/{\input /home/kovacs/tex/latex/} 
\newcommand\Get{\Input /home/kovacs/tex/latex/} 
\newcommand\toappear{\rm (to appear)}
\newcommand\mycite[1]{[#1]}
\newcommand\myref[1]{(\ref{#1})}
\newcommand\myli{\hfill\newline\smallskip\noindent{$\bullet$}\quad}
\newcommand\vol[1]{{\bf #1}\ } 
\newcommand\yr[1]{\rm (#1)\ } 
\newcommand\cf{cf.\ \cite}
\newcommand\mycf{cf.\ \mycite}
\newcommand\te{there exist}
\newcommand\st{such that}
\newcommand\myskip{3pt}
\newtheoremstyle{bozont}{3pt}{3pt}%
     {\itshape}
     {}
     {\bfseries}
     {.}
     {.5em}
     {\thmname{#1}\thmnumber{ #2}\thmnote{ \rm #3}}
\newtheoremstyle{bozont-sf}{3pt}{3pt}%
     {\itshape}
     {}
     {\sffamily}
     {.}
     {.5em}
     {\thmname{#1}\thmnumber{ #2}\thmnote{ \rm #3}}
\newtheoremstyle{bozont-sc}{3pt}{3pt}%
     {\itshape}
     {}
     {\scshape}
     {.}
     {.5em}
     {\thmname{#1}\thmnumber{ #2}\thmnote{ \rm #3}}
\newtheoremstyle{bozont-remark}{3pt}{3pt}%
     {}
     {}
     {\scshape}
     {.}
     {.5em}
     {\thmname{#1}\thmnumber{ #2}\thmnote{ \rm #3}}
\newtheoremstyle{bozont-def}{3pt}{3pt}%
     {}
     {}
     {\bfseries}
     {.}
     {.5em}
     {\thmname{#1}\thmnumber{ #2}\thmnote{ \rm #3}}
\newtheoremstyle{bozont-reverse}{3pt}{3pt}%
     {\itshape}
     {}
     {\bfseries}
     {.}
     {.5em}
     {\thmnumber{#2.}\thmname{ #1}\thmnote{ \rm #3}}
\newtheoremstyle{bozont-reverse-sc}{3pt}{3pt}%
     {\itshape}
     {}
     {\scshape}
     {.}
     {.5em}
     {\thmnumber{#2.}\thmname{ #1}\thmnote{ \rm #3}}
\newtheoremstyle{bozont-reverse-sf}{3pt}{3pt}%
     {\itshape}
     {}
     {\sffamily}
     {.}
     {.5em}
     {\thmnumber{#2.}\thmname{ #1}\thmnote{ \rm #3}}
\newtheoremstyle{bozont-remark-reverse}{3pt}{3pt}%
     {}
     {}
     {\sc}
     {.}
     {.5em}
     {\thmnumber{#2.}\thmname{ #1}\thmnote{ \rm #3}}
\newtheoremstyle{bozont-def-reverse}{3pt}{3pt}%
     {}
     {}
     {\bfseries}
     {.}
     {.5em}
     {\thmnumber{#2.}\thmname{ #1}\thmnote{ \rm #3}}
\newtheoremstyle{bozont-def-newnum-reverse}{3pt}{3pt}%
     {}
     {}
     {\bfseries}
     {}
     {.5em}
     {\thmnumber{#2.}\thmname{ #1}\thmnote{ \rm #3}}
\theoremstyle{bozont}    
\ifnum \value{are-there-sections}=0 {%
  \newtheorem{proclaim}{Theorem}
} 
\else {%
  \newtheorem{proclaim}{Theorem}[section]
} 
\fi
\newtheorem{thm}[proclaim]{Theorem}
\newtheorem{mainthm}[proclaim]{Main Theorem}
\newtheorem{cor}[proclaim]{Corollary} 
\newtheorem{cors}[proclaim]{Corollaries} 
\newtheorem{lem}[proclaim]{Lemma} 
\newtheorem{prop}[proclaim]{Proposition} 
\newtheorem{conj}[proclaim]{Conjecture}
\newtheorem{subproclaim}[equation]{Theorem}
\newtheorem{subthm}[equation]{Theorem}
\newtheorem{subcor}[equation]{Corollary} 
\newtheorem{sublem}[equation]{Lemma} 
\newtheorem{subprop}[equation]{Proposition} 
\newtheorem{subconj}[equation]{Conjecture}
\theoremstyle{bozont-sc}
\newtheorem{proclaim-special}[proclaim]{\specialthmname}
\newenvironment{proclaimspecial}[1]
     {\def\specialthmname{#1}\begin{proclaim-special}}
     {\end{proclaim-special}}
\theoremstyle{bozont-remark}
\newtheorem{rem}[proclaim]{Remark}
\newtheorem{subrem}[equation]{Remark}
\newtheorem{notation}[proclaim]{Notation} 
\newtheorem{assume}[proclaim]{Assumptions} 
\newtheorem{obs}[proclaim]{Observation} 
\newtheorem{example}[proclaim]{Example} 
\newtheorem{examples}[proclaim]{Examples} 
\newtheorem{complem}[equation]{Complement}
\newtheorem{const}[proclaim]{Construction}   
\newtheorem{ex}[proclaim]{Exercise} 
\newtheorem{subnotation}[equation]{Notation} 
\newtheorem{subassume}[equation]{Assumptions} 
\newtheorem{subobs}[equation]{Observation} 
\newtheorem{subexample}[equation]{Example} 
\newtheorem{subex}[equation]{Exercise} 
\newtheorem{claim}[proclaim]{Claim} 
\newtheorem{inclaim}[equation]{Claim} 
\newtheorem{subclaim}[equation]{Claim} 
\newtheorem{case}{Case} 
\newtheorem{subcase}{Subcase}   
\newtheorem{step}{Step}
\newtheorem{approach}{Approach}
\newtheorem{Fact}[proclaim]{Fact}
\newtheorem{fact}{Fact}
\newtheorem{subsay}{}
\newtheorem*{SubHeading*}{\SubHeadingName}%
\newtheorem{SubHeading}[proclaim]{\SubHeadingName}
\newtheorem{sSubHeading}[equation]{\sSubHeadingName}
\newenvironment{demo}[1] {\def\SubHeadingName{#1}\begin{SubHeading}}
  {\end{SubHeading}}%
\newenvironment{subdemo}[1]{\def\sSubHeadingName{#1}\begin{sSubHeading}}
  {\end{sSubHeading}} %
\newenvironment{demo-r}[1]{\def\SubHeadingName{#1}\begin{SubHeading-r}}
  {\end{SubHeading-r}}%
\newenvironment{subdemo-r}[1]{\def\sSubHeadingName{#1}\begin{sSubHeading-r}}
  {\end{sSubHeading-r}} %
\newenvironment{demo*}[1]{\def\SubHeadingName{#1}\begin{SubHeading*}}
  {\end{SubHeading*}}%
\newtheorem{defini}[proclaim]{Definition}
\newtheorem{question}[proclaim]{Question}
\newtheorem{subquestion}[equation]{Question}
\newtheorem{crit}[proclaim]{Criterion}
\newtheorem{pitfall}[proclaim]{Pitfall}
\newtheorem{addition}[proclaim]{Addition}
\newtheorem{principle}[proclaim]{Principle} 
\newtheorem{condition}[proclaim]{Condition}
\newtheorem{say}[proclaim]{}
\newtheorem{exmp}[proclaim]{Example}
\newtheorem{hint}[proclaim]{Hint}
\newtheorem{exrc}[proclaim]{Exercise}
\newtheorem{prob}[proclaim]{Problem}
\newtheorem{ques}[proclaim]{Question}    
\newtheorem{alg}[proclaim]{Algorithm}
\newtheorem{remk}[proclaim]{Remark}          
\newtheorem{note}[proclaim]{Note}            
\newtheorem{summ}[proclaim]{Summary}         
\newtheorem{notationk}[proclaim]{Notation}   
\newtheorem{warning}[proclaim]{Warning}  
\newtheorem{defn-thm}[proclaim]{Definition--Theorem}  
\newtheorem{convention}[proclaim]{Convention}  
\newtheorem*{ack}{Acknowledgment}
\newtheorem*{acks}{Acknowledgments}
\theoremstyle{bozont-def}    
\newtheorem{defn}[proclaim]{Definition}
\newtheorem{subdefn}[equation]{Definition}
\theoremstyle{bozont-reverse}    
\newtheorem{corr}[proclaim]{Corollary} 
\newtheorem{lemr}[proclaim]{Lemma} 
\newtheorem{propr}[proclaim]{Proposition} 
\newtheorem{conjr}[proclaim]{Conjecture}
\theoremstyle{bozont-reverse-sc}
\newtheorem{proclaimr-special}[proclaim]{\specialthmname}
\newenvironment{proclaimspecialr}[1]%
{\def\specialthmname{#1}\begin{proclaimr-special}}%
{\end{proclaimr-special}}
\theoremstyle{bozont-remark-reverse}
\newtheorem{remr}[proclaim]{Remark}
\newtheorem{subremr}[equation]{Remark}
\newtheorem{notationr}[proclaim]{Notation} 
\newtheorem{assumer}[proclaim]{Assumptions} 
\newtheorem{obsr}[proclaim]{Observation} 
\newtheorem{exampler}[proclaim]{Example} 
\newtheorem{exr}[proclaim]{Exercise} 
\newtheorem{claimr}[proclaim]{Claim} 
\newtheorem{inclaimr}[equation]{Claim} 
\newtheorem{SubHeading-r}[proclaim]{\SubHeadingName}
\newtheorem{sSubHeading-r}[equation]{\sSubHeadingName}
\newtheorem{SubHeadingr}[proclaim]{\SubHeadingName}
\newtheorem{sSubHeadingr}[equation]{\sSubHeadingName}
\newenvironment{demor}[1]{\def\SubHeadingName{#1}\begin{SubHeadingr}}{\end{SubHeadingr}}
\newtheorem{definir}[proclaim]{Definition}
\theoremstyle{bozont-def-newnum-reverse}    
\newtheorem{newnumr}[proclaim]{}
\theoremstyle{bozont-def-reverse}    
\newtheorem{defnr}[proclaim]{Definition}
\newtheorem{questionr}[proclaim]{Question}
\newtheorem{newnumspecial}[proclaim]{\specialnewnumname}
\newenvironment{newnum}[1]{\def\specialnewnumname{#1}\begin{newnumspecial}}{\end{newnumspecial}}
\numberwithin{equation}{proclaim}
\numberwithin{figure}{section} 
\newcommand\equinsect{\numberwithin{equation}{section}}
\newcommand\equinthm{\numberwithin{equation}{proclaim}}
\newcommand\figinthm{\numberwithin{figure}{proclaim}}
\newcommand\figinsect{\numberwithin{figure}{section}}
\newenvironment{sequation}{%
\numberwithin{equation}{section}%
\begin{equation}%
}{%
\end{equation}%
\numberwithin{equation}{proclaim}%
\addtocounter{proclaim}{1}%
}
\newcommand{\num}{\arabic{section}.\arabic{proclaim}}
\newenvironment{pf}{\smallskip \noindent {\sc Proof. }}{\qed\smallskip}
\newenvironment{enumerate-p}{
  \begin{enumerate}}
  {\setcounter{equation}{\value{enumi}}\end{enumerate}}
\newenvironment{enumerate-cont}{
  \begin{enumerate}
    {\setcounter{enumi}{\value{equation}}}}
  {\setcounter{equation}{\value{enumi}}
  \end{enumerate}}
\let\lenumi\labelenumi
\newcommand{\rmlabels}{\renewcommand{\labelenumi}{\rm \lenumi}}
\newcommand{\rmlabelsoff}{\renewcommand{\labelenumi}{\lenumi}}
\newenvironment{heading}{\begin{center} \sc}{\end{center}}
\newcommand\subheading[1]{\smallskip\noindent{{\bf #1.}\ }}
\newlength{\swidth}
\setlength{\swidth}{\textwidth}
\addtolength{\swidth}{-,5\parindent}
\newenvironment{narrow}{
  \medskip\noindent\hfill\begin{minipage}{\swidth}}
  {\end{minipage}\medskip}
\newcommand\nospace{\hskip-.45ex}
\makeatother

\title{The splitting principle and singularities}
\author{\me}
\date{\today}
\thanks{\mythanks}
\address{\myaddress}
\email{\myemail}
\urladdr{\myurladdr}
%
\subjclass[2010]{14J17, 14F05}

\begin{abstract}
  The splitting principle states that morphisms in a derived category do not
  ``split'' accidentally. This has been successsfully applied in several
  characterizations of rational, DB, and other singularities. In this article I prove
  a general statement, Theorem~\ref{thm:from-inj-surj-to-isom}, that implies many of
  the previous individual statements and improves some of the characterizations in
  the process. See Theorem~\ref{thm:kollar-kovacs-improve} for the actual statement.
\end{abstract}

\maketitle
\newcommand{\szabores}{Szab\'o-resolution\xspace}
\newcommand{\pairD}{\Delta}

\section{Introduction}

\noindent
The main guiding force of this article is the following principle.

\begin{demo*}{\bf {The Splitting Principle}}
  \it Morphisms in a derived category do not split accidentally.
\end{demo*}

\noindent
I will recall several theorems that justify this principle and make it precise in
their own context. For the necessary definitions please see the end of the
introduction.

\begin{rem}
  It is customary to casually use the word ``splitting'' to explain the statements of
  the theorems that follow.  However, the reader should be warned that one has to be
  careful with the meaning of this, because these ``splittings'' take place in a
  derived category, and derived categories are not abelian. For this reason, in the
  statements of the theorems below I use the terminology that a morphism admits a
  \emph{left inverse}.  In an abelian category this condition is equivalent to
  ``splitting'' and being a direct summand. With a slight abuse of language I labeled
  these as ``Splitting theorems'' cf. \eqref{thm:rtl-crit}, \eqref{thm:db-crit} and
  \eqref{thm:db-criterion}.
\end{rem}

\noindent
The first splitting theorem is a criterion for a singularity to be rational.

\begin{thm}[\protect{\cite[Theorem~1]{Kovacs00b}} ({\sc Splitting theorem
    I})]\label{thm:rtl-crit} 
  Let $\phi:Y\to X$ be a proper morphism of varieties over $\bC$ and
  $\varrho:\sO_X\to\rpforward\phi\sO_Y$ the associated natural morphism.  Assume
  that $Y$ has rational singularities and $\varrho$ has a left inverse, i.e., there
  exists a morphism (in the derived category of $\sO_X$-modules)
  $\varrho':\rpforward\phi\sO_Y\to\sO_X$ such that $\varrho'\circ\varrho$ is a
  quasi-isomorphism of $\sO_X$ with itself.
  Then $X$ has only rational singularities.
\end{thm}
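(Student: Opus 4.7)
The plan is first to reduce to $Y$ smooth and then to the case where $\phi$ factors through a resolution of $X$, so that Grothendieck duality together with Grauert--Riemenschneider can be brought to bear. Since $Y$ has rational singularities, any resolution $g\colon \tilde Y\to Y$ satisfies $\rpforward{g}\sO_{\tilde Y}\simeq \sO_Y$, and replacing $(\phi,Y)$ by $(\phi\circ g,\tilde Y)$ preserves the hypothesis on $\varrho$. So I may assume $Y$ is smooth. Now fix a resolution $\pi\colon \tilde X\to X$; a further Hironaka blowup of $Y$ keeps $Y$ smooth, does not alter $\rpforward{\phi}\sO_Y$, and allows me to assume that $\phi$ factors as $\phi=\pi\circ \psi$ for some morphism $\psi\colon Y\to \tilde X$.

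\textbf{Transferring the splitting to $\pi$.} The natural map $\alpha\colon\sO_X\to \rpforward{\pi}\sO_{\tilde X}$ satisfies $\varrho=\beta\circ\alpha$, where $\beta\colon\rpforward{\pi}\sO_{\tilde X}\to \rpforward{\phi}\sO_Y$ is obtained by applying $\rpforward{\pi}$ to the adjunction unit $\sO_{\tilde X}\to \rpforward{\psi}\sO_Y$. Hence $\varrho'\circ\beta$ is a left inverse of $\alpha$, and the corresponding distinguished triangle in $D(X)$ splits:
\[
\rpforward{\pi}\sO_{\tilde X}\simeq \sO_X\oplus C
\]
for some $C\in D(X)$. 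It suffices to show $C\simeq 0$.

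\textbf{Duality and Cohen--Macaulayness.} Apply $D_X:=R\sHom_X(-,\omega_X^\bullet)$. Because $\tilde X$ is smooth of dimension $n$, one has $\pi^!\omega_X^\bullet \simeq \omega_{\tilde X}[n]$, and Grauert--Riemenschneider gives $R^i\pi_*\omega_{\tilde X}=0$ for $i>0$. Combining these with Grothendieck duality:
\[
\pi_*\omega_{\tilde X}[n]\simeq \rpforward{\pi}\omega_{\tilde X}^\bullet \simeq D_X(\rpforward{\pi}\sO_{\tilde X}) \simeq \omega_X^\bullet \oplus D_X(C).
\]
The left-hand side is concentrated in cohomological degree $-n$, and since cohomology sheaves are additive across direct sums, both $\omega_X^\bullet$ and $D_X(C)$ must also be concentrated in that degree. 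This forces $\omega_X^\bullet\simeq \omega_X[n]$, i.e., $X$ is Cohen--Macaulay, and $D_X(C)\simeq K[n]$ for some coherent sheaf $K$, giving the sheaf-level decomposition
\[
\pi_*\omega_{\tilde X}\simeq \omega_X\oplus K
\]
in which the projection onto $\omega_X$ is the Grothendieck trace.

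\textbf{Finishing and the main obstacle.} Over the smooth locus $U\subset X$, $\pi$ is an isomorphism, so $K|_U=0$. On the other hand, $\omega_{\tilde X}$ is invertible, hence torsion-free, so its proper pushforward $\pi_*\omega_{\tilde X}$ is torsion-free on the integral scheme $X$, and its direct summand $K$ is torsion-free as well. A torsion-free sheaf vanishing on a dense open vanishes, so $K=0$. Equivalently $D_X(C)=0$, hence $C=0$ (as $D_X$ is an equivalence on $D^b_{\mathrm{coh}}(X)$), and so $\rpforward{\pi}\sO_{\tilde X}\simeq \sO_X$, which is the definition of rational singularities for $X$. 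The step I expect to be the trickiest is the duality/Grauert--Riemenschneider argument that converts the derived splitting on the $\sO$-side into the single-degree observation on the $\omega$-side: it is exactly there that Cohen--Macaulayness of $X$ drops out for free and the problem collapses to the clean torsion-free cleanup at the end.
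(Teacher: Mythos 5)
This paper does not actually contain a proof of Theorem~\ref{thm:rtl-crit}: it is quoted from \cite{Kovacs00b}, and the author explicitly notes that it is the one splitting theorem \emph{not} recovered by the abstract Theorem~\ref{thm:from-inj-surj-to-isom}. Your argument is correct and is essentially the proof in the cited original --- reduce to $Y$ smooth with $\phi$ factoring through a resolution $\pi\colon\tilde X\to X$, transfer the retraction to $\sO_X\to\rpforward{\pi}\sO_{\tilde X}$, then dualize via Grothendieck duality and Grauert--Riemenschneider to obtain Cohen--Macaulayness together with $\pi_*\omega_{\tilde X}\simeq\omega_X$ (Kempf's criterion) --- with only routine details left implicit (e.g.\ that the factorization through $\tilde X$ is arranged by blowing up the pullback of the ideal defining $\tilde X$, which uses the surjectivity of $\phi$ forced by the splitting).
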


\begin{rem}
  Note that $\phi$ in the theorem does not have to be birational or even generically
  finite.  It follows from the conditions that it is surjective.
\end{rem}

\begin{cor}\label{cor:split-rtl}
  Let $X$ be a complex variety and $\phi:Y\to X$ a resolution of singularities. If
  $\sO_X\to\rpforward\phi\sO_Y$ has a left inverse, then $X$ has rational
  singularities.
\end{cor}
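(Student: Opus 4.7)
The plan is to observe that Corollary \ref{cor:split-rtl} is essentially an immediate specialization of Theorem \ref{thm:rtl-crit}, so the proof amounts to verifying that the hypotheses of that theorem are satisfied in the setting of the corollary.

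First, I would recall that a resolution of singularities $\phi : Y \to X$ is, by definition, a proper birational morphism from a smooth variety $Y$ onto $X$. In particular, $\phi$ is proper, which is the first hypothesis of Theorem \ref{thm:rtl-crit}. Second, since $Y$ is smooth, it has (trivially) only rational singularities: the identity is a resolution, and $R\phi_* \sO_Y = \sO_Y$ in that case, so the higher direct images vanish and $Y$ itself satisfies the defining condition of rational singularities. This handles the second hypothesis. The third hypothesis, that the natural morphism $\varrho : \sO_X \to R\phi_* \sO_Y$ admit a left inverse in the derived category of $\sO_X$-modules, is precisely what is assumed in the statement of the corollary.

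With all three hypotheses of Theorem \ref{thm:rtl-crit} verified, I would simply invoke that theorem to conclude that $X$ has only rational singularities, which is the desired conclusion. There is no genuine obstacle here; the only thing to be a little careful about is the verification that a smooth variety has rational singularities, which is standard and essentially tautological. The content of the corollary is entirely absorbed by the more general Theorem \ref{thm:rtl-crit}, and the corollary should be read as emphasizing the classical special case (resolution of singularities) of that more general splitting criterion.
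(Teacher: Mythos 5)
Your proposal is correct and is exactly the argument the paper intends: the corollary is stated as an immediate specialization of Theorem~\ref{thm:rtl-crit}, with the only verifications being that a resolution is proper and that a smooth variety has rational singularities. Nothing further is needed.
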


\begin{cor}\label{cor:finite-rtl}
  Let $X$ be a complex variety and $\phi:Y\to X$ a finite morphism. If $Y$ has
  rational singularities, then so does $X$.
\end{cor}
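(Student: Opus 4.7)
The plan is to apply Theorem~\ref{thm:rtl-crit} directly to $\phi$. Since $\phi$ is finite, $\rpforward{\phi}\sO_Y=\phi_*\sO_Y$ is concentrated in degree zero, and the theorem's natural arrow is simply $\varrho\colon\sO_X\to\phi_*\sO_Y$. What is needed is therefore an $\sO_X$-linear morphism $\varrho'\colon\phi_*\sO_Y\to\sO_X$ whose composition with $\varrho$ is an isomorphism; the hypothesis that $Y$ has rational singularities combined with Theorem~\ref{thm:rtl-crit} then delivers the conclusion.

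The first move is to reduce to the case where $\phi$ is surjective and $X$ is irreducible. A finite morphism is closed, so its image is a union of components of $X$, and the question is local on $X$. Let $d$ denote the generic degree of $\phi$. Because $Y$ has rational singularities it is in particular normal, so every local section of $\phi_*\sO_Y$ is integral over $\sO_X$.

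The desired left inverse is the \emph{normalized trace}. At the generic point the extension $K(X)\subset K(Y)$ is separable of degree $d$, with the classical trace $\mathrm{tr}\colon K(Y)\to K(X)$ satisfying $\mathrm{tr}\circ\varrho_\eta=d\cdot\mathrm{id}_{K(X)}$. Because traces of integral elements are integral, this upgrades to a sheaf morphism $\mathrm{tr}\colon\phi_*\sO_Y\to\sO_X$ as soon as $\sO_X$ is integrally closed in its sheaf of total quotient rings, i.e.\ as soon as $X$ is normal. In that case $\varrho':=\tfrac{1}{d}\mathrm{tr}$ (valid in characteristic zero) satisfies $\varrho'\circ\varrho=\mathrm{id}_{\sO_X}$, and Theorem~\ref{thm:rtl-crit} concludes that $X$ has rational singularities.

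The main obstacle is accommodating a possibly non-normal $X$, and I would handle it by factoring $\phi$ through the normalization $\nu\colon X^\nu\to X$, which is possible because $Y$ is normal; this yields a finite surjective morphism $\psi\colon Y\to X^\nu$ between normal varieties. The trace argument above now applies verbatim to $\psi$ and, via Theorem~\ref{thm:rtl-crit}, forces $X^\nu$ to have rational singularities. The remaining—and genuinely subtle—step is to deduce that $\nu$ is itself an isomorphism. I expect this to come from the same trace construction, now interpreted as an $\sO_X$-linear retraction of $\sO_X\hookrightarrow\nu_*\sO_{X^\nu}\hookrightarrow\phi_*\sO_Y$ coming from the morphism $\phi$ itself (not just from the factor $\psi$), at which point one more appeal to Theorem~\ref{thm:rtl-crit} applied to the birational finite morphism $\nu$ should force $\sO_X=\nu_*\sO_{X^\nu}$.
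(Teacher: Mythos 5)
Your core idea --- split $\sO_X\to\phi_*\sO_Y=\rpforward\phi\sO_Y$ by the normalized trace $\tfrac1d\mathrm{tr}$ and quote Theorem~\ref{thm:rtl-crit} --- is exactly the intended argument, and your second and third paragraphs carry it out correctly under the additional hypotheses that $\phi$ is dominant and $X$ is normal. (Dominance really is an extra hypothesis you are inserting: the image of a finite morphism is closed but need not be a union of components of $X$ --- closed immersions are finite --- so the statement only makes sense with $\phi$ surjective, as the remark following Theorem~\ref{thm:rtl-crit} indicates.)

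The final paragraph, however, contains a gap that cannot be repaired. The trace of a local section of $\phi_*\sO_Y$ is an element of $K(X)$ that is integral over $\sO_X$; it therefore lies in $\nu_*\sO_{X^\nu}$, but there is no reason for it to lie in $\sO_X$ when $X$ is not normal. The composite $\sO_X\to\phi_*\sO_Y\to\nu_*\sO_{X^\nu}$, the second arrow being $\tfrac1d\mathrm{tr}$, is just the inclusion $\sO_X\into\nu_*\sO_{X^\nu}$, so the trace does not produce the $\sO_X$-valued retraction you would need in order to feed $\nu$ into Theorem~\ref{thm:rtl-crit}. Indeed no argument can close this gap: take $X$ to be the cuspidal plane cubic and $\phi:Y=\bA^1\to X$ its normalization; then $\phi$ is finite and surjective and $Y$ is smooth, yet $X$ is not normal and hence does not have rational singularities (normality is part of the definition, being the degree-zero piece of $\sO_X\qis\rpforward\phi\sO_Y$ for a resolution). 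So the corollary, read literally with the paper's notion of ``variety'' (reduced but not necessarily normal), is false; the consistent reading is the one in the original reference, with $X$ normal and $\phi$ dominant --- and under that reading your normalization detour is unnecessary, since your trace paragraph already constitutes the entire proof.
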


Using this criterion it is quite easy to prove that log terminal singularities are
rational \cite[Theorem~4]{Kovacs00b}.  For related statements see \cite[5.22]{KM98} and the
references therein.

The next several splitting theorems concern DB singularities:

\begin{thm}[\protect{\cite[2.3]{Kovacs99} ({\sc Splitting theorem
      II})}]\label{thm:db-crit} 
  Let $X$ be a complex variety. If $\sO_X\to \DuBois X$ has a left inverse, then $X$
  has DB singularities.
\end{thm}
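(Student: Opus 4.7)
The strategy is to mimic the proof of Splitting Theorem I (\ref{thm:rtl-crit}): the hypothesis makes $\sO_X$ a direct summand of $\DuBois X$ in the derived category, and the goal becomes showing the complementary summand vanishes. Since being DB is a local property and $\DuBois{}$ commutes with open restriction, I would work locally on $X$ and fix a cubical hyperresolution $\epsilon \colon X_\bullet \to X$ with each $X_i$ smooth; then $\DuBois X \simeq R\epsilon_* \sO_{X_\bullet}$ and the natural map $\sigma \colon \sO_X \to \DuBois X$ is identified with the augmentation.

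Given the left inverse $\pi \colon \DuBois X \to \sO_X$ with $\pi \circ \sigma = \id_{\sO_X}$, the additivity of the derived category produces a splitting $\DuBois X \simeq \sO_X \oplus C$, where $C$ fits into the triangle $\sO_X \to \DuBois X \to C \xrightarrow{+1}$. Away from $\Sigma = \Sing(X)$ the map $\sigma$ is already a quasi-isomorphism, so $\supp C \subseteq \Sigma$, and the theorem reduces to $C \simeq 0$.

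To force $C = 0$ I would try to invoke \eqref{thm:rtl-crit}. Take a log resolution $\phi \colon Y \to X$. By functoriality of $\underline\Omega^0$, there is a natural factorization $\sO_X \xrightarrow{\sigma} \DuBois X \xrightarrow{\nu} R\phi_* \DuBois Y \simeq R\phi_* \sO_Y$ of the canonical morphism $\varrho$. Composing $\pi$ with appropriate simplicial data from the hyperresolution (the components of $X_\bullet$ supported over $\Sigma$ record the discrepancy between $\DuBois X$ and $R\phi_* \sO_Y$), I would try to produce a left inverse of $\varrho$. Then \eqref{thm:rtl-crit} would imply $X$ has rational singularities, which are known to be DB, closing the argument.

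The main obstacle is precisely this construction: the naive attempt to invert $\nu$ fails, since $\nu$ is a quasi-isomorphism only when $X$ is already rational, and the components of $X_\bullet$ over $\Sigma$ must be used in an essential way to transport $\pi$ to a left inverse of $\varrho$. A parallel strategy I would keep in mind is a direct cohomological attack on $C$: use the simplicial spectral sequence $E_1^{p,q} = R^q(\epsilon_p)_*\sO_{X_p} \Rightarrow \sH^{p+q}(\DuBois X)$ together with the splitting to show inductively that the cohomology sheaves of $C$ all vanish, establishing $C \simeq 0$ directly without going through rationality.
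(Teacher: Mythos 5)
Your reduction to showing that the third vertex $C$ of the triangle $\sO_X\to\DuBois X\to C\xrightarrow{+1}$ vanishes matches the start of the paper's argument, but neither of your two routes to $C\qis 0$ can be completed. The route through \eqref{thm:rtl-crit} is not merely technically obstructed; it is impossible. If a left inverse of $\sO_X\to\DuBois X$ could always be transported to a left inverse of $\varrho\col\sO_X\to\rpforward\phi\sO_Y$ for a resolution $\phi$, then \eqref{thm:rtl-crit} would show that every $X$ satisfying the hypothesis has rational singularities. But a cone over an elliptic curve is DB (so $\sO_X\to\DuBois X$ is a quasi-isomorphism and trivially admits a left inverse) yet is not rational: DB is strictly weaker than rational, so no such transport of $\pi$ can exist in general, and the "main obstacle" you flag is fatal rather than technical.

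Your fallback route omits the one genuinely non-formal input. A splitting $\DuBois X\qis\sO_X\oplus C$ by itself only yields \emph{injectivity} of $\bH^i_Z(X,\sO_X)\to\bH^i_Z(X,\Om^0_X)$ for closed $Z$; nothing in the simplicial spectral sequence forces the complementary summand to vanish, and no induction on its pages gets started. What the paper actually uses (packaged as "cohomological surjectivity," \eqref{def:cohomologically-surjective}) is Du Bois's theorem \cite[4.5]{DuBois81}: for proper $X$ the surjection $H^i(X,\bC)\onto\bH^i(X,\Om^0_X)$ coming from $E_1$-degeneration factors through $H^i(X,\sO_X)$, so $H^i(X,\sO_X)\onto\bH^i(X,\Om^0_X)$ is surjective. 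The proof of \eqref{thm:from-inj-surj-to-isom} then cuts by general hyperplanes to make $\supp C$ finite, passes to a projective closure $\ol X$, and plays this global surjectivity against the local injectivity via the local-cohomology exact sequence and excision (\eqref{thm:meta-surjectivity}) to get $\bH^i_P(X,C)=0$, hence $C\qis 0$ on an affine chart. Without the Hodge-theoretic surjectivity and this localization mechanism, your direct cohomological attack has no engine; with them, it becomes exactly the paper's proof.
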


This criterion has several important consequences. It implies directly that rational
singularities are DB and it was used in \cite{KK10} to prove that log canonical
singularities are DB as well. In fact it is used in the proof of the next splitting
theorem.

\begin{thm}[\protect{\cite[1.6]{KK10} ({\sc Splitting theorem III})}]
  \label{thm:db-criterion}
  Let ${\phi}: Y\to X$ be a proper 
  morphism between reduced schemes of finite type over $\bC$.
  Let $W\subseteq X$ be a closed reduced subscheme with ideal sheaf $\sI_{W\subseteq
    X}$ and $F\leteq {\phi}^{-1}(W)\subset Y$ with ideal sheaf $\sI_{F\subseteq Y}$.
  Assume that the natural map $\varrho$
  $$
  \xymatrix{ \sI_{W\subseteq X} \ar[r]_-\varrho & \myR{\phi}_*\sI_{F\subseteq Y}
    \ar@{-->}@/_1.5pc/[l]_{\varrho'} }
  $$
  admits a left inverse $\varrho'$, that is,
  $\varrho'\circ\varrho=\id_{\sI_{W\subseteq X}}$. Then if $Y,F$, and $W$ all have DB
  singularities, then so does $X$.
\end{thm}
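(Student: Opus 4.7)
The plan is to deduce this from the Splitting Theorem II \eqref{thm:db-crit}, or rather from a slight enhancement of it for ideal/pair DB complexes. The key idea is to transport the hypothesis on $\varrho$ and $\varrho'$ along the natural transformations relating structure sheaves to Du~Bois complexes.

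First I would recall that for any closed immersion $Z\hookrightarrow V$ of reduced schemes, there is a natural morphism of exact triangles in the derived category of $\sO_V$-modules
$$
\xymatrix@C=1em{
\sI_{Z\subseteq V}\ar[r]\ar[d] & \sO_V\ar[r]\ar[d] & \sO_Z\ar[r]^-{+1}\ar[d] & {}\\
\underline\Omega^0_{V,Z}\ar[r] & \DuBois{V}\ar[r] & \DuBois{Z}\ar[r]^-{+1} & {}
}
$$
where $\underline\Omega^0_{V,Z}$ is the ``ideal'' Du~Bois complex of the pair. I will apply this both to $(X,W)$ and to $(Y,F)$. Since $Y$ and $F$ have DB singularities, the natural maps $\sO_Y\to\DuBois Y$ and $\sO_F\to\DuBois F$ are quasi-isomorphisms, so the five-lemma applied to the triangle for $(Y,F)$ yields that $\sI_{F\subseteq Y}\to\underline\Omega^0_{Y,F}$ is a quasi-isomorphism as well. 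Pushing this forward by $\phi$ gives the right vertical quasi-isomorphism in the commutative square
$$
\xymatrix{
\sI_{W\subseteq X}\ar[r]^-\varrho\ar[d] & \myR\phi_*\sI_{F\subseteq Y}\ar[d]^-\simeq\\
\underline\Omega^0_{X,W}\ar[r] & \myR\phi_*\underline\Omega^0_{Y,F}.
}
$$

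The next step is to combine this square with the given left inverse $\varrho'$. Define
$$
\varrho''\colon \underline\Omega^0_{X,W}\longrightarrow \myR\phi_*\underline\Omega^0_{Y,F}\xleftarrow{\ \simeq\ } \myR\phi_*\sI_{F\subseteq Y}\xrightarrow{\ \varrho'\ }\sI_{W\subseteq X}.
$$
By commutativity of the square, the composition of $\sI_{W\subseteq X}\to\underline\Omega^0_{X,W}$ with $\varrho''$ equals $\varrho'\circ\varrho=\id_{\sI_{W\subseteq X}}$, so the natural map $\sI_{W\subseteq X}\to\underline\Omega^0_{X,W}$ admits a left inverse.

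Now I would invoke the pair/ideal analogue of Splitting Theorem~II \eqref{thm:db-crit}: if $\sI_{W\subseteq X}\to\underline\Omega^0_{X,W}$ admits a left inverse, then it is a quasi-isomorphism, i.e., the pair $(X,W)$ is DB. This is the main technical point; it follows by essentially the same argument as \eqref{thm:db-crit}, replacing the role of the structure sheaf and its DB complex by the ideal sheaf and its ideal DB complex, which I expect to be the principal obstacle and is where the bulk of work in \cite{KK10} goes. Finally, feeding the resulting quasi-isomorphism $\sI_{W\subseteq X}\simeq\underline\Omega^0_{X,W}$ back into the morphism of triangles for $(X,W)$, together with the hypothesis that $W$ has DB singularities (so $\sO_W\simeq\DuBois W$), the five-lemma yields that $\sO_X\to\DuBois X$ is a quasi-isomorphism, i.e., $X$ has DB singularities.
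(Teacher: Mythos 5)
Your proposal is correct and follows essentially the same route as the paper: the paper likewise passes to the commutative square comparing $\sI_{W\subseteq X}\to\myR\phi_*\sI_{F\subseteq Y}$ with $\Om^0_{X,W}\to\myR\phi_*\Om^0_{Y,F}$, uses that $(Y,F)$ is a DB pair to invert the right vertical arrow, composes with $\varrho'$ to obtain a left inverse of $\sI_{W\subseteq X}\to\Om^0_{X,W}$, and then invokes the pair version of the splitting theorem (\ref{thm:db-pairs-intro}, i.e.\ \cite[5.4]{Kovacs10a}, itself a consequence of the abstract Theorem~\ref{thm:from-inj-surj-to-isom}). The ``pair/ideal analogue of Splitting Theorem II'' that you flag as the main technical point is exactly that existing result, and your reductions to and from it (the two five-lemma steps on the morphism of triangles) match the paper's.
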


This criterion forms the cornerstone of the proof of the following theorem:

\begin{thm}[\protect{\cite[1.5]{KK10}}]\label{thm:main}
  Let $\phi:{{Y}}\to {{X}}$ be a proper surjective morphism with connected fibers
  between normal varieties. Assume that 
  $Y$ 
  has log canonical singularities and $K_{{Y}}\sim_{\bQ,\phi} 0$, that is, $K_Y$ is a
  $\phi$-relatively numerically trivial $\bQ$-divisor.  Then $X$ is DB.
\end{thm}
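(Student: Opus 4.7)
The strategy is to apply Splitting Theorem III, namely Theorem~\ref{thm:db-criterion}, to a judiciously chosen pair of subschemes. Take $W=\phi(\nklt(Y))\subseteq X$ and $F=\phi^{-1}(W)_{\red}\subseteq Y$, so that outside $W$ the variety $Y$ has only klt (hence rational, hence DB) singularities. It then suffices to verify the hypotheses of Theorem~\ref{thm:db-criterion}: the DB property for $Y$, $F$, and $W$, and the existence of a left inverse for
$$
\varrho\colon\sI_{W\subseteq X}\longrightarrow\myR\phi_*\sI_{F\subseteq Y}.
$$

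The DB property of $Y$ is the fact that log canonical singularities are DB, the main result of \cite{KK10}, itself obtained via Splitting Theorem II. The DB property of $F$ follows analogously once one observes that $F$ is built out of log canonical centers of $Y$, which are themselves DB. For $W$ I would argue by Noetherian induction on $\dim W$: the base case is trivial, and the inductive step applies the theorem being proved to the restriction $\phi|_F\colon F\to W$, for which one checks that $F$ is (semi-)log canonical (a standard property of non-klt loci, after normalization) and $K_F\sim_{\bQ,\phi|_F}0$ (by adjunction applied to $K_Y\sim_{\bQ,\phi}0$).

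The heart of the argument is constructing $\varrho'$. Take a log resolution $\mu\colon\wt Y\to Y$ with $\pi=\phi\circ\mu$, and let $\wt F:=\mu^{-1}(F)_{\red}$, arranged to be an snc divisor containing every $\mu$-exceptional divisor. Since $Y$ is log canonical, every discrepancy is $\geq -1$, so $K_{\wt Y}+\wt F-\mu^*K_Y$ is effective; combined with $\mu^*K_Y\sim_{\bQ,\pi}0$ this yields
$$
K_{\wt Y}+\wt F\sim_{\bQ,\pi}B,\qquad B\geq 0.
$$
This is exactly the setup for a Koll\'ar--Ambro--Fujino-type vanishing/injectivity theorem on sncd pairs with reduced boundary. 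Such a vanishing, combined with Grothendieck duality and a careful diagram chase, produces the required left inverse in the derived category.

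The main obstacle is this final step. Ordinary Kawamata--Viehweg vanishing does not suffice because the boundary $\wt F$ is reduced rather than klt; one must therefore invoke a log canonical vanishing theorem of Ambro--Fujino type and then extract an honest derived-category splitting from what is a priori only a statement about higher direct images. The hypothesis $K_Y\sim_{\bQ,\phi}0$ plays a dual role: it is what allows the vanishing theorem to be applied (via the $\pi$-triviality of $\mu^*K_Y$), and it is what aligns the dualizing complex of $Y$ with $\sO_Y$ up to $\bQ$-linear equivalence along $\phi$, converting duality statements about $\omega_Y$ into splitting statements about $\sO_Y$.
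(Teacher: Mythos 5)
First, a point of reference: this paper does not actually prove Theorem~\ref{thm:main}. It is quoted from \cite[1.5]{KK10}, the text only records that Theorem~\ref{thm:db-criterion} ``forms the cornerstone of the proof,'' and the reader is referred to \cite{KK10} for the argument. So your proposal can only be measured against the proof in \cite{KK10}. Measured that way, your skeleton is recognizably the right one: the reduction to Theorem~\ref{thm:db-criterion} with $W$ related to the image of the non-klt locus, the induction on dimension for $W$, and the fact that the left inverse ultimately comes from torsion-freeness/injectivity theorems of Koll\'ar--Ambro--Fujino type on an snc model with reduced boundary are all genuinely the ingredients used there.

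There are, however, genuine gaps. The most serious is a circularity: you verify the hypothesis ``$Y$ is DB'' by invoking ``log canonical singularities are DB,'' but in \cite{KK10} that statement is the \emph{corollary} deduced from the very theorem you are proving (apply it with $\phi=\id$), and your Noetherian induction runs only over $\dim W$, so it never reaches $Y$ itself. The actual argument sidesteps this by applying the criterion of Theorem~\ref{thm:db-criterion} not to $\phi$ but to the composite $\pi=\phi\circ\mu\colon\wt Y\to X$ with $\mu$ a log resolution, so that the source is smooth (hence DB for free) and $\pi^{-1}(W)$ can be arranged to be an snc divisor (hence DB for elementary reasons). This also repairs your second gap: you assert that $F=\phi^{-1}(W)_{\red}$ is DB because it is ``built out of log canonical centers,'' but the full reduced preimage of $W$ is in general much larger than $\nklt(Y)$ (e.g., an entire fiber), and the claim that non-klt loci carry a natural (semi-)log canonical structure is Ambro--Fujino quasi-log theory, which \cite{KK10} deliberately avoids by working with the strata of the snc model and elementary adjunction there. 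Finally, the construction of $\varrho'$ --- which you correctly identify as the heart of the matter --- is left as an appeal to ``a careful diagram chase''; in \cite{KK10} this splitting is a separate theorem whose proof occupies a substantial portion of the paper. As a blueprint your proposal points in the right direction, but as written it is not yet a proof.
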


\begin{cor}[\protect{\cite[1.4]{KK10}}]
  Log canonical singularities are DB.
\end{cor}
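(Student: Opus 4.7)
The plan is to derive the corollary as an immediate application of Theorem~\ref{thm:main}, taking the morphism $\phi$ to be the identity on $X$. Log canonical singularities are by definition normal and the canonical divisor is $\bQ$-Cartier, so if $X$ is log canonical, then $X$ is a normal variety that qualifies as the target (and source) of such a morphism.

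More precisely, let $X$ be a variety with log canonical singularities and consider $\phi=\id_X:X\to X$. Then $\phi$ is proper, surjective, has connected fibers (single points), and is a morphism between normal varieties. Setting $Y=X$, the source has log canonical singularities by hypothesis, and the relative canonical sheaf vanishes on the nose: $K_Y-\phi^*K_X=0$, so trivially $K_Y\sim_{\bQ,\phi}0$. Every hypothesis of Theorem~\ref{thm:main} is therefore satisfied, and its conclusion gives that $X$ is DB.

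There is no substantial obstacle in the corollary itself; the entire content of the statement is packaged into Theorem~\ref{thm:main}. The genuine difficulty lies upstream, in Theorem~\ref{thm:main}, whose proof goes through the splitting criterion \eqref{thm:db-criterion} applied to an appropriate log resolution of $Y$ (where $W\subseteq X$ and $F=\phi^{-1}(W)\subseteq Y$ are chosen so that one can verify both the DB hypotheses on $Y$, $F$, $W$ and the existence of a left inverse to $\varrho$). Once that machinery is in place, however, the passage from Theorem~\ref{thm:main} to the corollary is formal.
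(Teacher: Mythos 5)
Your proposal is correct and matches the intended derivation: the paper presents this corollary as an immediate consequence of Theorem~\ref{thm:main} (deferring details to \cite{KK10}), and the standard argument is exactly yours, namely applying that theorem with $\phi=\id_X$, for which properness, surjectivity, connectedness of fibers, and $K_Y\sim_{\bQ,\phi}0$ are all trivially verified. You are also right that all the substance lives in Theorem~\ref{thm:main} itself.
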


\noindent
For the proofs and more general statements, please see \cite{KK10}.

\begin{subrem}
  Notice that in \eqref{thm:db-criterion} it is not required that ${\phi}$ be
  birational.  On the other hand the assumptions of the theorem and
  \cite[Thm~1]{Kovacs00b} imply that if $Y\setminus F$ has rational singularities,
  e.g., if $Y$ is smooth, then $X\setminus W$ has rational singularities as well.
\end{subrem}

This theorem is used in \cite{KK10} to derive various consequences, some of which
regard stable families and have strong consequences for moduli spaces of canonically
polarized varieties.  The interested reader should look at the original article to
obtain the full picture.

Finally, the newest splitting theorem is a generalization of \eqref{thm:db-crit} to
the case of pairs:

\begin{thm}[\protect{\cite[5.4]{Kovacs10a}} ({\sc Splitting theorem
    IV})]\label{thm:db-pairs-intro}
  Let $(X,\Sigma)$ be a reduced generalized pair. Assume that the natural morphism
  $\sI_{\Sigma\subseteq X}\to\Om_{X,\Sigma}^0$ has a left inverse. Then $(X,\Sigma)$
  is a DB pair.
\end{thm}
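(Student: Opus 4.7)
The plan is to adapt the proof of the absolute Splitting Theorem~II~\eqref{thm:db-crit} to the pair setting. By definition, $(X,\Sigma)$ is a DB pair precisely when the natural morphism $\sI_{\Sigma\subseteq X}\to\Om_{X,\Sigma}^0$ is a quasi-isomorphism, so the task reduces to upgrading the existence of a left inverse to an actual quasi-isomorphism.

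First I would form the mapping cone $\sC$ of $\varrho$, producing a distinguished triangle
\[
\sI_{\Sigma\subseteq X}\overset{\varrho}{\longrightarrow}\Om_{X,\Sigma}^0\longrightarrow \sC\overset{+1}{\longrightarrow}
\]
in the bounded derived category of coherent $\sO_X$-modules. The existence of the left inverse $\varrho'$ forces this triangle to split, yielding a decomposition $\Om_{X,\Sigma}^0\simeq \sI_{\Sigma\subseteq X}\oplus \sC$. The problem is then reduced to showing that $\sC\simeq 0$.

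Next I would pick a log resolution $\pi\colon (X',\Sigma')\to (X,\Sigma)$ with $X'$ smooth and $\Sigma'=\pi^{-1}(\Sigma)_{\mathrm{red}}$ a simple normal crossing divisor, such that $\pi$ is an isomorphism outside $\Sigma\cup\Sing X$. Because $(X',\Sigma')$ is a smooth pair, the comparison morphism $\sI_{\Sigma'\subseteq X'}\to \Om_{X',\Sigma'}^0$ is a quasi-isomorphism. Functoriality of the pair-DB-complex (via a cubic hyperresolution of $(X,\Sigma)$ that factors through $\pi$) yields a commutative square whose composition with $\varrho$ recovers the canonical morphism $\sI_{\Sigma\subseteq X}\to \myR\pi_*\sI_{\Sigma'\subseteq X'}$. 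Combining with the splitting, the summand $\sC$ is identified with a direct summand of the cone of this canonical morphism.

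The main obstacle is proving the vanishing of $\sC$ from this identification. The approach mirrors the absolute case~\eqref{thm:db-crit}: iterate the hyperresolution and propagate the splitting simplicially, using the ``idempotency'' of the DB-complex construction under pushforward to eventually force $\sC$ to be acyclic. In the pair setting the technical work lies in establishing the analogous propagation for the ideal sheaves $\sI_{\Sigma_\bullet\subseteq X_\bullet}$ on a cubic hyperresolution of $(X,\Sigma)$ compatible with a log resolution, and ensuring that the splitting can be lifted to the filtered derived category of this hyperresolution. Once these compatibilities are in place, the formal splitting argument concludes $\sC\simeq 0$, so $\varrho$ is a quasi-isomorphism and $(X,\Sigma)$ is a DB pair.
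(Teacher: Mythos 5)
Your opening reduction is fine and agrees in spirit with the paper's strategy: the left inverse splits the distinguished triangle, so $\Om^0_{X,\Sigma}\qis \sI_{\Sigma\subseteq X}\oplus\sC$, and everything hinges on showing $\sC\qis 0$. But the step that actually kills $\sC$ is missing from your proposal. The vanishing is not a formal consequence of functoriality, log resolutions, or any ``idempotency'' of the Du~Bois construction: the content of the splitting principle is precisely that the splitting cannot be accidental \emph{because of Hodge theory}. The indispensable input is the surjectivity
$$
\bH^i(\ol X,\sF(\ol X))\onto \bH^i(\ol X,\sG(\ol X)),\qquad \sF=\sI_{\Sigma\subseteq X},\ \sG=\Om^0_{X,\Sigma},
$$
on a projective compactification --- the pair version of Du~Bois's theorem that $H^i(X,\bC)\onto\bH^i(X,\Om^0_X)$, invoked in the paper via \cite[4.2]{Kovacs10a} as ``cohomological surjectivity.'' Combined with the injectivity supplied by the left inverse, this forces the relevant (local hyper)cohomology maps to be isomorphisms; nothing in your outline produces a surjectivity from anywhere. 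Your proposed identification of $\sC$ with a summand of the cone of $\sI_{\Sigma\subseteq X}\to\myR\pi_*\sI_{\Sigma'\subseteq X'}$ also does not help: that cone is in general nonzero (its vanishing is a rationality-type statement, not a DB one), and a single log resolution does not compute $\Om^0_{X,\Sigma}$, so the bottom row of the square you describe is not a quasi-isomorphism and $\sC$ is not visibly a summand of anything that vanishes.

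A second gap is the passage from global cohomological statements to a quasi-isomorphism of complexes on a not necessarily proper $X$. The paper handles this (Theorems \ref{thm:meta-surjectivity} and \ref{thm:from-inj-surj-to-isom}) by cutting with a general complete intersection so that the support of the cone becomes a finite set $P$, transferring the surjectivity to $\bH^i_P$ via the local cohomology exact triangle and excision on a compactification, and then using affineness to conclude that the cohomology sheaves of the cone vanish. Some mechanism of this kind is unavoidable; ``propagating the splitting simplicially through a hyperresolution'' is not an argument, and I do not see how to complete your outline without reintroducing both the Hodge-theoretic surjectivity and this local-cohomology descent.
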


The main goal of this article is to prove a general splitting theorem that provides a
unified proof of \eqref{thm:db-crit}, \eqref{thm:db-criterion}, and
\eqref{thm:db-pairs-intro}. 
For the special definitions see \S\ref{sec:splitting-principle}.

\begin{thm}[({\sc The Splitting Principle})]\label{cor:splitting-principle-intro}
  Let $\sch=\sch_k$ be the category of schemes of finite type over a fixed
  algebraically closed field $k$, $\sF$ and $\sG:\sch\to D\sch$ be two consistent
  ordinary functors, and $\eta:\sF\to\sG$ a consistent cohomologically surjective
  natural transformation as defined in \eqref{def:consistent-functors} and
  \eqref{def:cohomologically-surjective}.  Let $Y$ be a generically reduced
  quasi-projective scheme of finite type over $k$ and $V\subseteq Y$ a dense open
  subset such that $\eta_V:\sF(V)\isom \sG(V)$ is a quasi-isomorphism. Assume that
  $\eta_Y:\sF(Y)\to \sG(Y)$ has a left inverse.
  Then it 
  is a quasi-isomorphism.
\end{thm}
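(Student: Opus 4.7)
The plan is to proceed by Noetherian induction on $\dim Y$, combining the algebraic input (the left inverse of $\eta_Y$) with the geometric decomposition of $Y$ into $V$ and its closed complement.

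The base case $\dim Y = 0$ is immediate: a generically reduced zero-dimensional scheme is reduced, so $V=Y$ and $\eta_Y=\eta_V$ is a quasi-isomorphism by hypothesis. For the inductive step, the first move is to recast the conclusion cohomologically. From $\eta'_Y\circ\eta_Y=\id_{\sF(Y)}$, passing to cohomology sheaves yields $h^i(\eta'_Y)\circ h^i(\eta_Y)=\id$ for every $i$, so each $h^i(\eta_Y)$ is split injective. Consequently $\eta_Y$ is a quasi-isomorphism if and only if every $h^i(\eta_Y)$ is surjective; equivalently, the cone $C$ of $\eta_Y$ is acyclic. This reduces the problem to a local question on $Y$.

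Next I would localize to the complement. Let $Z\mathrel{:=}(Y\setminus V)_{\mathrm{red}}$, which is again a generically reduced quasi-projective scheme, and has $\dim Z<\dim Y$. Since $\eta_V$ is a quasi-isomorphism, the cone $C$ is supported on $Z$. Appealing to consistency of $\sF$ and $\sG$, the decomposition $(Z,V)$ of $Y$ produces a morphism of distinguished triangles linking $\eta_Y$, $\eta_Z$, and $\eta_V$. To apply the inductive hypothesis to $Z$ one needs a dense open $V_Z\subseteq Z$ on which $\eta_{V_Z}$ is a quasi-isomorphism; the formal framework of consistent functors in \S\ref{sec:splitting-principle} is set up to supply such an open (for instance via the regular locus, on which the relevant $\sF$ and $\sG$ of interest agree by construction). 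Granted this, induction gives that $\eta_Z$ is a quasi-isomorphism.

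The main obstacle is to propagate this back to $Y$: two-out-of-three in the triangles would immediately finish the argument if the morphism $\eta_Y$ literally fit into a distinguished triangle with $\eta_Z$ and $\eta_V$, but the framework of consistent functors is weaker than that, and this is where cohomological surjectivity enters decisively. The hypothesis forces the connecting-type error terms in the long exact cohomology sequence to be surjective, so that the combination of split injectivity of $h^i(\eta_Y)$ (from the left inverse) with the quasi-isomorphism $\eta_Z$ and the vanishing of $C$ away from $Z$ kills every remaining obstruction to surjectivity of $h^i(\eta_Y)$. The most delicate step will be unpacking the precise interaction of \emph{consistency} and \emph{cohomological surjectivity} in \S\ref{sec:splitting-principle}: those definitions are presumably tailored so that the diagram chase outlined above really does close, with the left inverse on $Y$, the cohomological surjectivity of $\eta$, and the inductive quasi-isomorphism on $Z$ together forcing $C$ to be acyclic.
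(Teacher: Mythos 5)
Your opening reduction is fine: a left inverse in the derived category does give $h^i(\eta'_Y)\circ h^i(\eta_Y)=\id$, so each $h^i(\eta_Y)$ is injective and the problem becomes the acyclicity of the cone $C$, which is supported on $Y\setminus V$. The paper starts the same way (it records the injectivity on local hypercohomology $\bH^i_Z(\eta_X)$ rather than on cohomology sheaves, but this is the same use of the splitting hypothesis). The difficulty is everything after that. Your inductive step rests on ``a morphism of distinguished triangles linking $\eta_Y$, $\eta_Z$, and $\eta_V$'' coming from the decomposition of $Y$ into $Z=(Y\setminus V)_{\mathrm{red}}$ and $V$, attributed to consistency. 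Consistency (Definition~\ref{def:consistent-functors}) asserts only that $\sF(X)\otimes_{\myL}\sO_H\qis\sF(H)$ for a \emph{general hyperplane section} $H$; it gives no comparison between $\sF(Y)$ and $\sF(Z)$ for an arbitrary closed subscheme $Z$, and no localization triangle of the sort you need (for the motivating examples $\sO_{(-)}$, $\Om^0_{(-)}$, $\sI_{\Sigma\subseteq X}$ no such triangle exists). Moreover, the inductive hypothesis cannot even be invoked on $Z$: the theorem supplies a dense open with $\eta$ a quasi-isomorphism only on $Y$, and nothing in the abstract framework produces such an open in $Z$. Finally, cohomological surjectivity (Definition~\ref{def:cohomologically-surjective}) is a statement about global hypercohomology on a compactification $\ol X$ of an affine scheme; as formulated it cannot ``force connecting-type error terms to be surjective'' in a triangle that does not exist.

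The actual mechanism, which your proposal is missing, is dimension reduction by general hyperplane sections --- this is the one thing consistency is designed to permit. One cuts $Y$ by a general complete intersection $X$ of the right codimension so that $X\cap\supp C$ becomes a \emph{finite nonempty} set $P$ (assuming $C\not\qis 0$), notes that the cone of $\eta_X$ is $C\otimes_{\myL}\sO_X$ with support $P$, replaces $X$ by an affine neighbourhood, and invokes the compactification $\ol X$ from cohomological surjectivity. Theorem~\ref{thm:meta-surjectivity} then transfers the surjectivity of $\bH^i(\eta_{\ol X})$ to surjectivity of the local hypercohomology maps $\bH^i_P(\eta_X)$, using that $\eta$ is an isomorphism on $X\setminus P$. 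Meeting this surjectivity with the injectivity supplied by the left inverse gives $\bH^i_P(X,C\otimes_{\myL}\sO_X)=0$, hence (since the support is exactly $P$ and $X$ is affine) $C\otimes_{\myL}\sO_X\qis 0$, contradicting $P\neq\emptyset$. Without the passage to finite support and local cohomology there is no bridge between the global surjectivity hypothesis and the local vanishing you need, so the argument as proposed does not close.
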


As a corollary of this theorem we obtain a more general statement that does not only
imply these three theorems, but it also strengthens \cite[1.6]{KK10} (see
\eqref{thm:db-criterion}) by changing a simple one way implication to an equivalence.
For the precise statement please see \eqref{thm:kollar-kovacs-improve}. It may also
be of interest that this constitutes a new proof of \cite[1.6]{KK10} that is
considerably simpler than the original one.

Finally, let me address the point that the reader have probably noticed. I have
listed four splitting theorems and the abstract theorem proved in this article
implies three of them. Considering the nature of the four theorems this is not
surprising, but the abstract theorem \eqref{thm:db-pairs-intro} may actually be used
to derive criteria similar to \eqref{thm:rtl-crit} that implies that certain
singularities are rational. I will leave figuring out these possibilities for the
reader. I would also like to issue a challenge to generalize
\eqref{thm:db-pairs-intro} to a statement that implies \ref{thm:rtl-crit} as well.

\subsection{\bf Definitions and Notation}\label{demo:defs-and-not}
%
  If $\phi:Y\to Z$ is a birational morphism, then $\exc(\phi)$ will denote the
  \emph{exceptional set} of $\phi$. For a closed subscheme $W\subseteq X$, the ideal
  sheaf of $W$ is denoted by $\sI_{W\subseteq X}$ or if no confusion is likely, then
  simply by $\sI_W$.  For a point $x\in X$, $\kappa(x)$ denotes the residue field of
  $\sO_{X,x}$.
  
  For morphisms $\phi:X\to B$ and $\vartheta: T\to B$, the symbol $X_T$ will denote
  $X\times_B T$ and $\phi_T:X_T\to T$ the induced morphism.  In particular, for $b\in
  B$ I write $X_b = \phi^{-1}(b)$.
  Of course, by symmetry, we also have the notation $\vartheta_X:T_X\simeq X_T\to X$
  and if $\sF$ is an $\sO_X$-module, then $\sF_T$ will denote the $\sO_{X_T}$-module
  $\vartheta_X^*\sF$.




  Let $X$ be a scheme. 
  Let $D_{\rm filt}(X)$ denote the derived category of filtered complexes of
  $\sO_{X}$-modules with differentials of order $\leq 1$ and $D_{\rm filt, coh}(X)$
  the subcategory of $D_{\rm filt}(X)$ of complexes $\cx K$, such that for all $i$,
  the cohomology sheaves of $Gr^{i}_{\rm filt}K^{\kdot}$ are coherent cf.\
  \cite{DuBois81}, \cite{GNPP88}.  Let $D(X)$ and $D_{\rm coh}(X)$ denote the derived
  categories with the same definition except that the complexes are assumed to have
  the trivial filtration.  The superscripts $+, -, b$ carry the usual meaning
  (bounded below, bounded above, bounded).  Isomorphism in these categories is
  denoted by $\qis$.  A sheaf $\sF$ is also considered as a complex $\sF$ with
  $\sF^0=\sF$ and $\sF^i=0$ for $i\neq 0$.  If $\sfA$ is a complex in any of the
  above categories, then $h^i(\sfA)$ denotes the $i$-th cohomology sheaf of
  $\sfA$.
  The \emph{support} of $\sfA$ is the union of the supports of its cohomology
  sheaves: $\supp\sfA\leteq \bigcup_i \supp h^i(\sfA)$.

  The right derived functor of an additive functor $F$, if it exists, is denoted by
  $\myR F$ and $\myR^iF$ is short for $h^i\circ \myR F$. Furthermore, $\bH^i$,
  $\bH^i_{\rm c}$, $\bH^i_Z$ , and $\sH^i_Z$ will denote $\myR^i\Gamma$,
  $\myR^i\Gamma_{\rm c}$, $\myR^i\Gamma_Z$, and $\myR^i\sH_Z$ respectively, where
  $\Gamma$ is the functor of global sections, $\Gamma_{\rm c}$ is the functor of
  global sections with proper support, $\Gamma_Z$ is the functor of global sections
  with support in the closed subset $Z$, and $\sH_Z$ is the functor of the sheaf of
  local sections with support in the closed subset $Z$.  Note that according to this
  terminology, if $\phi\col Y\to X$ is a morphism and $\sF$ is a coherent sheaf on
  $Y$, then $\myR\phi_*\sF$ is the complex whose cohomology sheaves give rise to the
  usual higher direct images of $\sF$.

  I will often use the notion that a morphism ${f}: \sfA\to \sfB$ in a derived
  category \emph{has a left inverse}. This means that there exists a morphism
  $f^\ell: \sfB\to \sfA$ in the same derived category such that
  $f^\ell\circ{f}:\sfA\to\sfA$ is the identity morphism of $\sfA$. I.e., $f^\ell$ is
  a \emph{left inverse} of ${f}$.

  I will also make the following simplification in notation. First observe that if
  $\iota:\Sigma \into X$ is a closed embedding of schemes then $\iota_*$ is exact and
  hence $\myR\iota_*=\iota_*$. This allows one to make the following harmless abuse
  of notation: If $\sfA\in\ob D(\Sigma)$, then, as usual for sheaves, I will drop
  $\iota_*$ from the notation of the object $\iota_*\sfA$. In other words, I will,
  without further warning, consider $\sfA$ an object in $D(X)$.

  A \emph{generalized pair} $(X,\Sigma)$ consists of an equidimensional variety
  (i.e., a reduced scheme of finite type over a field $k$) $X$ and a closed subscheme
  $\Sigma\subseteq X$.  A morphism of generalized pairs $\phi:(Y,\Gamma)\to
  (X,\Sigma)$ is a morphism $\phi:Y\to X$ such that $\phi(\Gamma)\subseteq \Sigma$.
  A \emph{reduced generalized pair} is a generalized pair $(X,\Sigma)$ such that
  $\Sigma$ is reduced.

  The \emph{log resolution} of a generalized pair $(X, W)$ is a proper birational
  morphism $\pi: Y\to X$ such that $\exc(\pi)$ is a divisor and
  $\pi^{-1}W+\exc(\pi)$ is an snc divisor.

  Let $X$ be a complex scheme and $\Sigma$ a closed subscheme whose complement in $X$
  is dense. Then $(X_{\kdot}, \Sigma_\kdot)\to (X, \Sigma)$ is a \emph{good
    hyperresolution} if $X_\kdot\to X$ is a hyperresolution, and if
  $U_\kdot=X_\kdot\times_X (X\setminus \Sigma)$ and $\Sigma_\kdot=X_\kdot\setminus
  U_\kdot$, then for all $\alpha$ either $\Sigma_\alpha$ is a divisor with normal
  crossings on $X_\alpha$ or $\Sigma_\alpha=X_\alpha$. Notice that it is possible
  that $X_\kdot$ has some components that map into $\Sigma$. These components are
  contained in $\Sigma_\mydot$.  For more details and the existence of such
  hyperresolutions see \cite[6.2]{DuBois81} and \cite[IV.1.21, IV.1.25,
  IV.2.1]{GNPP88}.  For a primer on hyperresolutions see the appendix of
  \cite{Kovacs-Schwede11}.

  Let $(X,\Sigma)$ be a reduced generalized pair and let $\Om^\kdot_{X,\Sigma}$
  denote the Deligne-Du~Bois complex of $(X,\Sigma)$. The $0^\text{th}$ associated
  graded quotient of this will be denoted by $\Om^0_{X,\Sigma}$.  If
  $\Sigma=\emptyset$, it will be dropped from the notation: $\Om^0_{X}\leteq
  \Om^0_{X,\Sigma}$. For more details see Steenbrink \cite[\S 3]{Steenbrink85} and
  \cite[3.9]{Kovacs10a} and the relevant references in the latter article.



\section{The Abstract Splitting Principle}\label{sec:splitting-principle}


In this section I will introduce a few new notions to generalize the conditions
needed to prove the desired abstract theorem and prove a few general statements
leading to the main theorem.

First we need a definition mainly for simplifying notation and terminology.

\begin{defini}
  Let $\sch$ be a category of schemes and $D\sch$ the following associated category
  of pairs: An object of $D\sch$ is a pair $(X,\sfA)$ consisting of a scheme
  $X\in\ob\sch$ and an object $\sfA\in\Ob D(X)$; and a morphism $\phi:(X,\sfA)\to
  (Y,\sfB)$ consist of a morphism of schemes $\phi:X\to Y$ (denoted by the same
  symbol unless confusion is possible) and a morphism in $D(Y)$, $\phi^\#:\sfB\to
  \myR\phi_*\sfA$. Observe that there exists a natural embedding of $\sch$ into
  $D\sch$ by mapping any $X\in \Ob\sch$ to the pair $(X,\sO_X)\in \Ob D\sch$. Note
  that $\sch$ is \emph{a} category of schemes, not necessarily \emph{the} category of
  schemes. In particular, especially in applications, we will often assume that $\sch$
  is the category of schemes of finite type over an algebraically closed field, for
  instance $\mathbb C$.

  A functor $\sS:\sch\to D\sch$ will be called \emph{ordinary} if $\sS(X)=(X,\sF(X))$
  for any $X\in\Ob \sch$, i.e., the scheme part of the pair $\sS(X)$ is equal to the
  original scheme $X$.  In this case we will identify $\sS=(\id_\sch,\sF)$ with
  $\sF$.
\end{defini}

Next we consider a condition that can be reasonably expected from any geometrically
defined functors.

\begin{defini}\label{def:consistent-functors}
  Let $\sch=\sch_k$ be the category of schemes of finite type over a fixed
  algebraically closed field $k$ and $\sF:\sch\to D\sch$ an ordinary functor.  $\sF$
  will be called a \emph{consistent functor} if for any quasi-projective generically
  reduced scheme $X\in\ob\sch$ and any general hyperplane section $H\subseteq X$
  there exist a natural isomorphism
  $$\sF(X)\otimes_{\myL} \sO_H\qis \sF(H).$$

  If $\sF$ and $\sG:\sch\to D\sch$ are two consistent (ordinary) functors, then a
  natural transformation $\eta:\sF\to\sG$ is called a \emph{consistent natural
    transformation} if for any quasi-projective generically reduced scheme
  $X\in\ob\sch$, and any general hyperplane section $H\subseteq X$ there exists a
  commutative diagram:
  $$
  \xymatrix{%
    \sF(X)\otimes_{\myL} \sO_H \ar[r]^-{\qis}
    \ar[d]_{\eta_X\otimes_{\myL}\id_{\sO_H}} & \sF(H)
    \ar[d]^{\eta_H} \\
    \sG(X)\otimes_{\myL} \sO_H \ar[r]^-{\qis} & \sG(H). }
  $$
\end{defini}

The next definition is an abstract way to grasp a condition implied by the Hodge
decomposition of singular cohomology that plays a key role in the proof of
\eqref{thm:db-crit}. The fact that the Hodge-to-de Rham spectral sequence for a
smooth complex projective variety $X$ degenerates at $E_1$ implies that the natural
map on cohomology
$$
H^i(X,\bC)\onto H^i(X,\sO_X)
$$
is surjective for all $i$. For not necessarily smooth projective schemes the target
of the equivalent of this surjectivity is the corresponding hypercohomology of
$\Om^0_X$. For our purposes this implies that for an arbitrary complex projective
scheme of finite type there exists a natural map 
$$
H^i(X,\sO_X)\onto \bH^i(X,\Om^0_X)
$$
which is surjective for all $i$. This surjectivity comes from singular cohomology and
Hodge theory, but once we have it in this form the rest of the proof of
\eqref{thm:db-crit} does not require either one of those, in particular, it does not
require us to work over the complex numbers or even in characteristic zero (except
for the definition of $\Om^0_X$).

\begin{defini}\label{def:cohomologically-surjective}
  Let $\sch=\sch_k$ be the category of schemes of finite type over a fixed
  algebraically closed field $k$, $\sF$ and $\sG:\sch\to D\sch$ be two consistent
  ordinary functors, and $\eta:\sF\to\sG$ a consistent natural transformation. 

  Then $\eta$ will be called \emph{cohomologically surjective} if for any
  generically reduced %
  affine %
  scheme $X\in\ob\sch$ there exists an $\ol X\in \ob\sch$ such that $X\subseteq \ol
  X$ is an open set and
  $$\bH^i(\eta_{\ol X}):\bH^i(\ol X,\sF(\ol X))\onto \bH^i(\ol X,\sG(\ol X))$$ is
  surjective for all $i$.
\end{defini}

The following is a key ingredient of the overall argument. The main point of this
statement is to relay the surjectivity obtained for projective schemes to
quasi-projective ones. In order to avoid losing important information this is done by
using local cohomology.

\begin{thm}
  \label{thm:meta-surjectivity}
  Let $\sF,\sG:\sch\to D\sch$ be two ordinary functors and $\eta:\sF\to\sG$ a natural
  transformation.  Further let $\ol X$ be a scheme, $X\subseteq \ol X$ an open
  subscheme, and $P\subset\ol X$ a closed subscheme. Assume that $P\subseteq X$ and
  let $U\leteq X\setminus P$. Further assume that
  \begin{enumerate-p}
  \item $\bH^i(\eta_{\ol X}):\bH^i(\ol X,\sF(\ol X))\onto \bH^i(\ol X,\sG(\ol X))$ is
    surjective for all $i$, and
  \item $\bH^i(\eta_{U}):\bH^i(U, \sF(U))\isom \bH^i(U,\sG(U))$ is an isomorphism for
    all $i$.
  \end{enumerate-p}
  Then $\bH^i_P(\eta_{X}):\bH^i_P(X,\sF(X))\onto \bH^i_P(X,\sG(X))$ is surjective for
  all $i$.
\end{thm}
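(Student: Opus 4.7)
The plan is to execute a diagram chase in a morphism of long exact sequences of local cohomology, bridging the two hypotheses via the excision isomorphism $\bH^i_P(\ol X, \sF(\ol X))\simeq \bH^i_P(X, \sF(X))$ (and likewise for $\sG$). The natural arena is the local cohomology distinguished triangle
$$\myR\Gamma_P(X,\sF(X))\to \myR\Gamma(X,\sF(X))\to \myR\Gamma(U,\sF(U))\xrightarrow{+1}$$
together with its $\sG$-analogue, connected vertically by $\eta$; this yields a commutative ladder of long exact sequences. I will tacitly use that for an ordinary functor the restriction of $\sF(X)$ to the open subscheme $U$ agrees (via the structural map of the functor) with $\sF(U)$, and similarly on $\ol X\supseteq X$, which is what makes the excision identification above meaningful.

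The chase proceeds as follows. Fix $c'\in \bH^i_P(X,\sG(X))$ and view it via excision as an element of $\bH^i_P(\ol X,\sG(\ol X))$; push it forward to $\tilde e'\in \bH^i(\ol X,\sG(\ol X))$. By hypothesis (1), $\tilde e'$ lifts to some $\tilde e\in \bH^i(\ol X,\sF(\ol X))$. Restrict $\tilde e$ to $U$: by naturality, its image in $\bH^i(U,\sG(U))$ equals the restriction of $\tilde e'$, which itself is the restriction to $U$ of an element of local cohomology with support in $P$, hence vanishes by exactness of the $\sG$-row. Now hypothesis (2) kicks in: since $\bH^i(\eta_U)$ is an isomorphism, the restriction of $\tilde e$ to $\bH^i(U,\sF(U))$ must also vanish. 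Therefore, the restriction of $\tilde e$ to $\bH^i(X,\sF(X))$ lifts, by exactness of the $\sF$-row, to some $c\in \bH^i_P(X,\sF(X))$.

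It remains to check that $c$ genuinely maps to $c'$, not merely to something with the same image in $\bH^i(X,\sG(X))$. By construction, $\eta(c)$ and $c'$ agree after pushing to $\bH^i(X,\sG(X))$, so their difference $c'-\eta(c)$ lies in the image of the connecting map $\delta_\sG\colon\bH^{i-1}(U,\sG(U))\to \bH^i_P(X,\sG(X))$. Writing $c'-\eta(c)=\delta_\sG(\epsilon')$ and invoking hypothesis (2) in degree $i-1$, pick $\epsilon\in \bH^{i-1}(U,\sF(U))$ with $\eta_U(\epsilon)=\epsilon'$; then $c+\delta_\sF(\epsilon)$ is an honest preimage of $c'$. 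This is a standard five-lemma style correction; the main point I would flag is that it is essential that hypothesis (2) asserts an \emph{isomorphism} in all degrees rather than just a surjection, for it is exactly the invertibility in degree $i-1$ that permits the closing correction and turns the chase into a genuine surjection onto $\bH^i_P(X,\sG(X))$.
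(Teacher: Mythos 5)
Your argument is correct and rests on the same two pillars as the paper's proof: the long exact sequence of local cohomology linking $\bH^i_P$, $\bH^i$ of the ambient space, and $\bH^i$ of the open complement $U$, together with excision to identify $\bH^i_P(\ol X,\blank)$ with $\bH^i_P(X,\blank)$. Only the bookkeeping differs. The paper sets $Q=\ol X\setminus X$ and $Z=P\disjoint Q$, writes a single ladder on $\ol X$ with supports in $Z$ (whose open complement is again your $U=X\setminus P$), concludes surjectivity of $\beta_i\colon\bH^i_Z(\ol X,\sF(\ol X))\to\bH^i_Z(\ol X,\sG(\ol X))$ by the five lemma, and only then projects onto the $P$-summand of $\bH^i_Z\simeq\bH^i_P\oplus\bH^i_Q$ before applying excision. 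You never introduce $Q$ or $Z$: you run the ladder on $X$ with supports in $P$ and feed hypothesis (1) into the chase through excision at the outset; your closing correction by $\delta_\sF(\epsilon)$ is exactly the surjectivity half of the five lemma carried out by hand. The logical inputs are identical in both versions --- surjectivity of $\bH^i(\eta_{\ol X})$, injectivity of $\bH^i(\eta_U)$, and surjectivity of $\bH^{i-1}(\eta_U)$; in particular, contrary to your closing remark, the correction step consumes only surjectivity in degree $i-1$, not full invertibility there (it is the earlier vanishing step that needs injectivity in degree $i$). Both versions also tacitly use that $\sF(\ol X)$, $\sF(X)$, $\sF(U)$ are compatible under restriction, a point you rightly flag and the paper leaves implicit. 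The paper's packaging via $Z=P\disjoint Q$ buys a one-line appeal to the five lemma; yours avoids the disjoint-support decomposition of local cohomology at the cost of an explicit chase.
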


\begin{proof}
  Let $Q=\ol X\setminus X$, $Z=P\disjoint Q$, and $U=\ol X\setminus Z=X\setminus P$.
  Consider the exact triangle of functors,
    \begin{equation}
    \label{eq:13}
    \xymatrix{%
      \bH^0_Z({\ol X},\blank ) \ar[r] & \bH^0({\ol X},\blank ) \ar[r] & \bH^0(U,\blank )
      \ar[r]^-{+1} & 
    }
  \end{equation}
  and apply it to the morphism $\eta_{\ol X}:\sF(\ol X)\to\sG(\ol X)$. One obtains a
  morphism of two long exact sequences:
  $$
  \hskip-4.75em\xymatrix{%
    \dots \ar[r] & \bH^{i-1}(U,{\sF(U)} ) \ar[d]^{\alpha_{i-1}}\ar[r] &
    \bH^{i}_Z({\ol X},{\sF(\ol X)} ) \ar[d]^{\beta_i}\ar[r] & \bH^{i}({\ol
      X},{\sF(\ol X)} ) \ar[d]^{\gamma_i}\ar[r] &
    \bH^{i}(U,{\sF(U)} ) \ar[d]^{\alpha_i} \ar[r] & \dots  \\
    \dots \ar[r] & \bH^{i-1}(U,\sG(U) ) \ar[r] & \bH^{i}_Z({\ol X},\sG(\ol X) )
    \ar[r] & \bH^{i}({\ol X},\sG(\ol X) ) \ar[r] & \bH^{i}(U,\sG(U) ) \ar[r] & \dots
    .}
  $$
  By assumption, $\alpha_i$ is an isomorphism and $\gamma_i$ is surjective for all
  $i$. Then by the 5-lemma, $\beta_i$ is also surjective for all $i$.
    
  By construction $P\cap Q=\emptyset$ and hence
  \begin{align*}
    \bH^i_Z({\ol X}, \sF(\ol X)) &\simeq \bH^i_P({\ol X}, \sF(\ol X)) \oplus
    \bH^i_Q({\ol X}, \sF(\ol X)) \\
    \bH^i_Z({\ol X}, \sG(\ol X)) &\simeq \bH^i_P({\ol X}, \sG(\ol X)) \oplus
    \bH^i_Q({\ol X}, \sG(\ol X))
  \end{align*}
  It follows that the natural map (which is also the restriction of $\beta_i$),
  $$
  \bH^i_P(\ol X, \sF(\ol X)) \to \bH^i_P(\ol X, \sG(\ol X))
  $$
  is surjective for all $i$.  Now, by excision on local cohomology one has that
  $$
  \bH^i_P(\ol X, \sF(\ol X)) \simeq \bH^i_P(X, \sF(X) \quad\text{and}\quad
  \bH^i_P(\ol X, \sG(\ol X)) \simeq \bH^i_P(X, \sG(X)),
  $$
  and so the desired statement follows.
\end{proof}

The next theorem is the main result of this article. It generalizes the statement and
proof of those theorems mentioned in the introduction to a quite general level. In
the next section I will explain how this implies almost immediately those three
results and stregthens one of them. However, it seems reasonable to expect that this
form will be used later to prove similar statements in different situations.

\begin{thm}\label{thm:from-inj-surj-to-isom}
  Let $\sch=\sch_k$ be the category of schemes of finite type over a fixed
  algebraically closed field $k$, $\sF$ and $\sG:\sch\to D\sch$ be two consistent
  ordinary functors, and $\eta:\sF\to\sG$ a consistent cohomologically surjective
  natural transformation as defined in \eqref{def:consistent-functors} and
  \eqref{def:cohomologically-surjective}.  Let $Y$ be a generically reduced
  quasi-projective scheme of finite type over $k$ and $V\subseteq Y$ a dense open
  subset such that $\eta_V:\sF(V)\isom \sG(V)$ is a quasi-isomorphism. If for any
  general complete intersection $X\subseteq Y$ and any closed subscheme $Z\subseteq
  X\setminus V$,
  \begin{equation}
    \label{eq:inj}
    \bH^i_Z(\eta_{X}):\bH^i_Z(X,\sF(X))\into \bH^i_Z(X,\sG(X))
  \end{equation}
  is injective for all $i$, then $\eta_Y:\sF(Y)\isom\sG(Y)$ is a quasi-isomorphism.
\end{thm}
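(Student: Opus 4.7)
The plan is to prove that the cone $C := \operatorname{cone}(\eta_Y) \in D(Y)$ vanishes, by induction on $\dim Y$. Since $\eta_V$ is a quasi-isomorphism, $\supp C \subseteq Y \setminus V$; hence showing $C = 0$ establishes the theorem.

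For the base case $\dim Y = 0$, generic reducedness forces $Y$ reduced, and $V$ dense open in a zero-dimensional reduced scheme forces $V = Y$, so $\eta_Y = \eta_V$ is a quasi-isomorphism. For the inductive step with $\dim Y = n > 0$, I take a general hyperplane section $H \subseteq Y$. By Bertini, $H$ is generically reduced and quasi-projective with $V \cap H$ dense open in $H$. The hypotheses restrict to $(H, V \cap H)$: consistency and cohomological surjectivity are intrinsic to $\sF, \sG, \eta$, and any general complete intersection $X \subseteq H$ is a general complete intersection in $Y$ of one higher codimension, so the injectivity hypothesis carries over. The induction hypothesis then gives that $\eta_H$ is a quasi-isomorphism, whence by consistency $C \otimes^L \sO_H \qis 0$.

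To deduce $C = 0$, assume for contradiction that some closed point $y$ lies in $\supp C$. First I reduce to the case when $y$ is isolated in $\supp C$: if $\dim_y \supp C > 0$, choose a general hyperplane $H$ through $y$ cut out locally by $f \in \frm_y$. The Koszul triangle $C_y \xrightarrow{\cdot f} C_y \to C_y \otimes^L \sO_{H,y} \xrightarrow{+1}$ together with $(C \otimes^L \sO_H)_y \qis 0$ forces $f$ to act invertibly on each cohomology sheaf $h^i(C_y)$, whence Nakayama's lemma (using coherence of the cohomology sheaves) gives $C_y = 0$, a contradiction. Thus every $y \in \supp C$ is isolated.

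For such isolated $y$, choose an affine open $U \ni y$ with $U \cap \supp C = \{y\}$ and an open embedding $U \subseteq \ol U$ provided by cohomological surjectivity. Apply Theorem~\ref{thm:meta-surjectivity} with $P = \{y\}$, closed in $\ol U$ and contained in $U$, noting that $\eta_{U \setminus \{y\}}$ is a quasi-isomorphism; this yields that $\bH^i_{\{y\}}(\eta_U)$ is surjective. The injectivity hypothesis for $X = Y, Z = \{y\}$, combined with excision identifying $\bH^i_{\{y\}}(\eta_U)$ with $\bH^i_{\{y\}}(\eta_Y)$, gives the matching injectivity, so the map is an isomorphism and $\bH^i_{\{y\}}(U, C|_U) = 0$ for all $i$. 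Since $C|_U$ is supported at $y$, this forces $C_y = 0$, the desired contradiction. The main obstacle is the interface between the global injectivity input (for $X = Y$) and the local surjectivity output of Theorem~\ref{thm:meta-surjectivity}: one needs coherence of the $h^i(C)$ for the Nakayama reduction to isolated support, together with careful technical verification that the hypotheses of the meta-surjectivity theorem apply (closedness of $P$ in $\ol U$ and inheritance of the injectivity hypothesis from $Y$ to its general hyperplane sections).
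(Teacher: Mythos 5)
Your endgame---combining Theorem~\ref{thm:meta-surjectivity} with the injectivity hypothesis \eqref{eq:inj} to kill the local cohomology of the cone at an isolated point of its support, then deducing that the cone vanishes there---is exactly the paper's. Where you genuinely differ is in how you reduce to finite support: you induct on $\dim Y$, using consistency for a general hyperplane section plus Bertini to get $C\otimes_{\myL}\sO_H\qis 0$ and then a Koszul/Nakayama argument to shrink $\supp C$; the paper makes no induction and simply slices once by a general complete intersection $X\subseteq Y$ of codimension $\dim\supp\sD(Y)$, so that $\supp(\sD(Y)\otimes_{\myL}\sO_X)=X\cap\supp\sD(Y)$ is finite and non-empty, and runs the local-cohomology argument on $X$ itself. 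Your route proves the stronger intermediate fact that $\supp C$ is already finite on $Y$; the paper's is shorter and uses consistency only for the one slice. Both readings require ``general complete intersection $X\subseteq Y$'' to include the codimension-zero case $X=Y$ (you invoke \eqref{eq:inj} with $X=Y$, $Z=\{y\}$; the paper needs the same when the support is already finite).

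The one step that does not work as written is your reduction to isolated support. A ``general hyperplane $H$ through $y$'' is a general member of the sub-linear system of hyperplanes containing $y$, not a general hyperplane section of $Y$; neither the consistency isomorphism $\sF(Y)\otimes_{\myL}\sO_H\qis\sF(H)$ of Definition~\ref{def:consistent-functors} nor your inductive hypothesis (which needs general complete intersections of $H$ to be general complete intersections of $Y$) is available for such an $H$, so you are not entitled to $(C\otimes_{\myL}\sO_H)_y\qis 0$. The fix is immediate: if $\supp C$ had a positive-dimensional component, a genuinely general hyperplane section $H$ would meet it at some point $z$; the induction gives $C\otimes_{\myL}\sO_H\qis 0$ for this $H$, and your Koszul/Nakayama argument at $z$ (applied to the top non-vanishing cohomology sheaf, using the coherence you already flagged) yields $C_z\qis 0$, contradicting $z\in\supp C$. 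Hence $\supp C$ is finite and the rest of your argument goes through.
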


\begin{proof}
  Let $\sD(Y)$ be an object in $D\sch$ that completes the morphism
  $\eta_Y:\sF(Y)\to\sG(Y)$ to a distinguished triangle:
  $$
  \xymatrix{%
    \sF(Y)\ar[r] & \sG(Y) \ar[r] & \sD(Y) \ar[r]^-{+1} & . 
  }
  $$

  Let $T=\supp\sD(Y)\subset Y\setminus V$, a closed subset of $Y$.  We need to prove
  that $\sD(Y)\qis 0$, that is, that $T=\emptyset$. Suppose that $T\neq\emptyset$ and
  we will derive a contradiction.

  By assumption $Y\setminus T\supset V$ a dense open subset of $Y$. It follows that
  if $X\subseteq Y$ is a general complete intersection of $Y$ of the appropriate
  codimension, then $P\leteq X\cap T$ is a finite closed \emph{non-empty} subset.

  Since $\eta$ is consistent, cf.\ \eqref{def:consistent-functors}, setting
  $\sD(X)\leteq \sD(Y)\otimes_{\myL}\sO_X$ one obtains a distinguished triangle:
  $$
  \xymatrix{%
    \sF(X)\ar[r] & \sG(X) \ar[r] & \sD(X) \ar[r]^-{+1} & ,
  }
  $$
  such that $P=\supp\sD(X)$. We will prove that $P=\emptyset$ which is a
  contradiction to the way $P$ was defined.

  As $P$ is finite we may assume that $X$ is affine.  Consider $X\subseteq\ol X$
  given by the fact that $\eta$ is cohomologically surjective, cf.\
  \eqref{def:cohomologically-surjective}. Again, since $P$ is a finite set, it
  follows that $P\subset \ol X$ is also closed and then it follows by %
  \eqref{thm:meta-surjectivity} and %
  the assumption in \eqref{eq:inj} %
  that
  \begin{equation}
    \label{eq:bb2}
    \bH^i_P(\eta_{X}):\bH^i_P(X,\sF(X))\isom \bH^i_P(X,\sG(X))
  \end{equation}
  is an isomorphism for all $i$, and then it follows that
  \begin{equation*}
    \bH^i_P(X,\sD(X))=0\quad\text{for all $i$.}
  \end{equation*}
  Since $\supp\sD(X)= {P}$ it also follows that
  $$\bH^i(X\setminus P, \sD(X))=0$$ for all $i$ as well, and then 
  \begin{equation}
    \label{eq:H-is-zero}
    \bH^i(X, \sD(X))=0  
  \end{equation}
  for all $i$ by the long exact sequence induced by (\ref{eq:13}) applied
  with $\ol X\leftrightarrow X$ and $Z\leftrightarrow P$.

  Since $X$ is affine, %
  the spectral sequence that computes hypercohomology from the cohomology of the
  cohomology sheaves of the complex $\sD(X)$ degenerates and gives that $\bH^i(X,
  \sD(X))=H^0(X, h^i(\sD(X)))$ for all $i$. 
  It follows by (\ref{eq:H-is-zero}) that $h^i(\sD(X))=0$ for all $i$.  Therefore
  $\sD(X)\qis 0$ and hence $P=\emptyset$. We arrived to our promised contradiction,
  so the desired statement is proven.
\end{proof}

The following is a straightforward corollary of \eqref{thm:from-inj-surj-to-isom},
its main value is in that its conditions may be easier to verify.

\begin{cor}[({\sc The Splitting Principle})]\label{cor:splitting-principle}
  Let $\sch=\sch_k$ be the category of schemes of finite type over a fixed
  algebraically closed field $k$, $\sF$ and $\sG:\sch\to D\sch$ be two consistent
  ordinary functors, and $\eta:\sF\to\sG$ a consistent cohomologically surjective
  natural transformation as defined in \eqref{def:consistent-functors} and
  \eqref{def:cohomologically-surjective}.  Let $Y$ be a generically reduced
  quasi-projective scheme of finite type over $k$ and $V\subseteq Y$ a dense open
  subset such that $\eta_V:\sF(V)\isom \sG(V)$ is a quasi-isomorphism. Assume that
  $\eta_Y:\sF(Y)\to \sG(Y)$ has a left inverse.
  Then it 
  is a quasi-isomorphism.
\end{cor}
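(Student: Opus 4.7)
The plan is to reduce this to Theorem~\ref{thm:from-inj-surj-to-isom} by showing that the existence of a left inverse for $\eta_Y$ forces $\bH^i_Z(\eta_X)$ to be injective for every general complete intersection $X\subseteq Y$ and every closed $Z\subseteq X\setminus V$. The key observation is that ``having a left inverse'' is preserved by any additive functor, so once the left inverse survives the restriction from $Y$ to $X$, applying local cohomology will automatically produce a left inverse of $\bH^i_Z(\eta_X)$ in the category of abelian groups, hence injectivity (indeed, split injectivity).

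First I would fix a left inverse $\varrho':\sG(Y)\to\sF(Y)$ with $\varrho'\circ\eta_Y = \id_{\sF(Y)}$ in $D(Y)$. For a general complete intersection $X\subseteq Y$, obtained by iteratively intersecting with general hyperplane sections, the scheme $X$ remains quasi-projective and (by Bertini) generically reduced at each stage, so the consistency of $\sF$ and $\sG$ can be applied inductively to yield natural quasi-isomorphisms
\[
\sF(X)\qis \sF(Y)\otimes^{\myL}_{\sO_Y}\sO_X,\qquad \sG(X)\qis \sG(Y)\otimes^{\myL}_{\sO_Y}\sO_X,
\]
and consistency of $\eta$ identifies $\eta_X$ with $\eta_Y\otimes^{\myL}\id_{\sO_X}$ under these quasi-isomorphisms. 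Applying the additive functor $(\blank)\otimes^{\myL}_{\sO_Y}\sO_X$ to the identity $\varrho'\circ\eta_Y = \id_{\sF(Y)}$ produces a morphism $\varrho'_X:\sG(X)\to\sF(X)$ in $D(X)$ satisfying $\varrho'_X\circ\eta_X = \id_{\sF(X)}$. In particular, $\eta_X$ admits a left inverse in $D(X)$.

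Next I would apply the functor $\bH^i_Z(X,\blank)$ to this pair of morphisms. Since $\bH^i_Z$ is an additive functor from $D(X)$ to abelian groups, it carries $\varrho'_X\circ\eta_X = \id_{\sF(X)}$ to an identity
\[
\bH^i_Z(\varrho'_X)\circ \bH^i_Z(\eta_X) = \id_{\bH^i_Z(X,\sF(X))}.
\]
Hence $\bH^i_Z(\eta_X)$ is a split monomorphism for every $i$ and every closed subscheme $Z\subseteq X\setminus V$, verifying the injectivity hypothesis~\eqref{eq:inj} of Theorem~\ref{thm:from-inj-surj-to-isom}.

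Finally I would invoke Theorem~\ref{thm:from-inj-surj-to-isom} directly: all its remaining hypotheses ($\sF,\sG$ consistent ordinary functors, $\eta$ consistent and cohomologically surjective, $Y$ generically reduced quasi-projective, and $\eta_V$ a quasi-isomorphism on the dense open $V$) are assumed in the statement. The conclusion is that $\eta_Y$ is itself a quasi-isomorphism, as desired. The only nontrivial ingredient is the iterated compatibility with derived restriction to a general complete intersection, and this is exactly what consistency of the transformation is designed to give; I do not anticipate any substantive obstacle beyond keeping careful track of that compatibility.
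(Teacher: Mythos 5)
Your proposal is correct and follows essentially the same route as the paper's own proof: transport the left inverse of $\eta_Y$ to $\eta_X$ for a general complete intersection $X$ via consistency, deduce that $\bH^i_Z(\eta_X)$ is a (split) injection for every closed $Z$, and invoke Theorem~\ref{thm:from-inj-surj-to-isom}. The paper states these steps more tersely, but the argument is the same; your explicit tracking of the iterated hyperplane-section compatibility is just a fuller writing-out of what the paper leaves implicit.
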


\begin{proof}
  If $\eta_Y:\sF(Y)\to \sG(Y)$ has a left inverse, then the same holds for $\eta_X$
  for any general complete intersection $X\subseteq Y$, and so
  $$\bH^i_Z(\eta_{X}):\bH^i_Z(X,\sF(X))\into \bH^i_Z(X,\sG(X))$$ is injective 
  for any closed subset $Z\subseteq X$. Then the statement follows from
  \eqref{thm:from-inj-surj-to-isom}.
\end{proof}

\section{Applications}

In this sections I show how \eqref{thm:from-inj-surj-to-isom} implies
\eqref{thm:db-crit}, \eqref{thm:db-criterion}, and \eqref{thm:db-pairs-intro}.

\subsection{DB singularities}

The first application is one of the first appearances of the splitting principle:

\begin{thm}[\protect{\cite[2.3]{Kovacs99} see
    \eqref{thm:db-crit}}]\label{thm:db-sings}
  Let $X$ be a scheme of finite type over $\mathbb C$. If the natural map
  $\sO_X\to\Om_X^0$ admits a left inverse, then $X$ has DB singularities.
\end{thm}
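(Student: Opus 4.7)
The plan is to apply the abstract Splitting Principle (Corollary~\ref{cor:splitting-principle}) with $\sF(X) = (X, \sO_X)$, $\sG(X) = (X, \Om_X^0)$, and $\eta$ the natural transformation induced by the canonical morphism $\sO_X \to \Om_X^0$. Since being DB is a local property on $X$, I would first reduce to the case that $X$ is quasi-projective by restricting to an affine open cover; the hypothesis that $\eta_X$ admits a left inverse is preserved under restriction to open subschemes. One may also assume $X$ is generically reduced, since otherwise the conclusion is vacuous on the nonreduced components. For the dense open $V \subseteq Y = X$ required by the Splitting Principle, take $V = X_{\mathrm{sm}}$, the smooth locus, which is dense and on which $\Om^0_V \qis \sO_V$, so $\eta_V$ is tautologically a quasi-isomorphism.

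The substantive work is in verifying (i) that $\sF$ and $\sG$ are consistent ordinary functors, and (ii) that $\eta$ is a consistent cohomologically surjective natural transformation. For (i), consistency of $\sF = \sO$ is essentially immediate: a general hyperplane section of a generically reduced quasi-projective scheme is cut out by a non-zero-divisor, giving $\sO_X \otimes_{\myL} \sO_H \qis \sO_H$. Consistency of $\sG = \Om^0$ is a Bertini-type statement for the Deligne--Du Bois complex, $\Om^0_X \otimes_{\myL} \sO_H \qis \Om^0_H$ for general hyperplanes $H$, which follows from the compatibility of good hyperresolutions with general hyperplane sections. The commutative square witnessing consistency of $\eta$ is simply the naturality of the canonical morphism.

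For (ii), given a generically reduced affine $X$, I choose a projective compactification $\bar X$ (which exists because $X$ is quasi-projective) and invoke the $E_1$-degeneration of the Hodge-to-de Rham spectral sequence for the Du Bois complex on a complex proper scheme. As discussed in the remarks preceding Definition~\ref{def:cohomologically-surjective}, this yields the surjection $\bH^i(\bar X, \sO_{\bar X}) \onto \bH^i(\bar X, \Om^0_{\bar X})$ for all $i$. This is the one step where working over $\bC$ is essential.

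With these verifications in hand, Corollary~\ref{cor:splitting-principle} applies directly: the hypothesis that $\eta_X: \sO_X \to \Om_X^0$ admits a left inverse forces $\eta_X$ to be a quasi-isomorphism, which is precisely the statement that $X$ has DB singularities. I expect the main obstacle to be the Bertini-type consistency of the functor $\Om^0$ under general hyperplane sections, which requires unpacking the hyperresolution machinery; the cohomological surjectivity is comparatively direct once one appeals to classical Hodge theory for singular projective schemes.
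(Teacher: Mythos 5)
Your proposal is correct and follows essentially the same route as the paper, which applies Corollary~\ref{cor:splitting-principle} with $\sF=\sO$, $\sG=\Om^0$ and $V=X\setminus\Sing X$, citing \cite[2.6]{Kovacs10a} for consistency of the functors and \cite[4.5]{DuBois81} for cohomological surjectivity. The details you supply --- localizing to reduce to the quasi-projective case, the Bertini-type compatibility of $\Om^0$ with general hyperplane sections, and the Hodge-theoretic surjectivity on a projective compactification --- are exactly the content of those two citations.
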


\begin{proof}
  Let $\sch=\sch_k$ be the category of schemes of finite type over $\mathbb C$,
  $\sF(\blank)=\sO_{\blank}$ and $\sG(\blank)=\Om^0_{\blank}$. These define two
  ordinary functors $\sch\to D\sch$. They are both consistent as defined in
  \eqref{def:consistent-functors} cf.\ \cite[2.6]{Kovacs10a} and there exists a
  consistent cohomologically surjective natural transformation $\eta:\sF\to\sG$ by
  \cite[4.5]{DuBois81}. Let $V\leteq X\setminus \Sing X$. Then $\eta_V:\sF(V)\isom
  \sG(V)$ is a quasi-isomorphism. Then the statement follows from
  \eqref{cor:splitting-principle}.
\end{proof}

\subsection{DB pairs}

\begin{thm}[\protect{\cite[5.4]{Kovacs10a} see
    \eqref{thm:db-pairs-intro}}]\label{thm:db-pairs}  
  Let $(X,\Sigma)$ be a reduced generalized pair. Assume that the natural morphism
  $\sI_{\Sigma\subseteq X}\to\Om_{X,\Sigma}^0$ has a left inverse. Then $(X,\Sigma)$
  is a DB pair.
\end{thm}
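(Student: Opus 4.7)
The plan is to apply the Splitting Principle (Corollary \ref{cor:splitting-principle}) in a mild generalization of its stated framework, one in which the underlying category consists of reduced generalized pairs rather than schemes. A careful reading of \eqref{thm:meta-surjectivity}, \eqref{thm:from-inj-surj-to-isom}, and \eqref{cor:splitting-principle} shows that every step in those arguments is purely formal --- local cohomology, excision, long exact sequences, and the five-lemma --- together with the consistency and cohomological surjectivity of the functors involved. Since all of these notions make equally good sense in a category of pairs, the entire splitting machinery carries over verbatim once the right functors are set up.

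With that understood, I would let $\sch^{\text{pair}}_k$ denote the category of reduced generalized pairs over $k=\bC$, and define ordinary functors $\sF,\sG\colon \sch^{\text{pair}}_k\to D\sch^{\text{pair}}_k$ by $\sF(X,\Sigma)=\sI_{\Sigma\subseteq X}$ and $\sG(X,\Sigma)=\Om^0_{X,\Sigma}$, with $\eta\colon \sF\to\sG$ the natural morphism appearing in the statement. The verifications split cleanly into three items: first, that $\sF$ and $\sG$ are consistent in the sense of \eqref{def:consistent-functors}; second, that $\eta$ is a consistent natural transformation; and third, that $\eta$ is cohomologically surjective. The first two items will follow from the pair analogue of the restriction-to-hyperplane results cited in the proof of \eqref{thm:db-sings}, namely \cite[2.6]{Kovacs10a} for $\sG$, together with the elementary compatibility of ideal sheaves of closed subschemes with a general hyperplane section.

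For the dense open subset $V\subseteq X$ on which $\eta_V$ becomes a quasi-isomorphism, I would take $V$ to be the locus where $X$ is smooth and $\Sigma\cap V$ is an snc divisor; there the pair $(V,\Sigma\cap V)$ is log smooth, so $\Om^0_{V,\Sigma\cap V}\qis \sI_{\Sigma\cap V\subseteq V}$ and $\eta_V$ is a quasi-isomorphism. Since the hypothesis of the theorem is exactly the existence of a left inverse for $\eta_X$, the generalized splitting principle then forces $\eta_X$ itself to be a quasi-isomorphism, which by definition is the statement that $(X,\Sigma)$ is a DB pair.

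The main obstacle I expect lies in the third item, cohomological surjectivity: for every affine pair $(X,\Sigma)$ one must produce a projective pair $(\ol X,\ol \Sigma)$ containing $(X,\Sigma)$ as an open subpair and such that $\bH^i(\ol X,\sI_{\ol\Sigma\subseteq\ol X})\onto \bH^i(\ol X,\Om^0_{\ol X,\ol\Sigma})$ for all $i$. This is the pair version of the Hodge-to-de~Rham degeneration input used in the proof of \eqref{thm:db-sings} and amounts to the relative Hodge-theoretic content of the Du~Bois complex of a pair developed in \cite{Kovacs10a}. Once this ingredient is in hand, the rest of the argument is formal and mirrors the template of \eqref{thm:db-sings} exactly.
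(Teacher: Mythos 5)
Your proposal follows essentially the same route as the paper: the paper also applies Corollary~\ref{cor:splitting-principle} with $\sF(\Sigma)=\sI_{\Sigma\subseteq X}$ and $\sG(\Sigma)=\Om^0_{X,\Sigma}$, citing \cite[3.18]{Kovacs10a} for consistency and \cite[4.2]{Kovacs10a} for the cohomological surjectivity you correctly flag as the key non-formal input. The only (harmless) difference is the choice of $V$: the paper simply takes $V=(X\setminus\Sing X)\setminus\supp\Sigma$, which avoids having to invoke that log smooth pairs are DB on the snc locus.
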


\begin{proof}
  Let $\sch=\sch_k$ be the category of subschemes $\Sigma$ of $X$ of finite type over
  $\mathbb C$, $\sF(\Sigma)=\sI_{\Sigma\subseteq X}$ and
  $\sG(\Sigma)=\Om^0_{X,\Sigma}$. These define two ordinary functors $\sch\to D\sch$.
  They are both consistent as defined in \eqref{def:consistent-functors} by
  \cite[3.18]{Kovacs10a} and there exists a consistent cohomologically surjective
  natural transformation $\eta:\sF\to\sG$ by \cite[4.2]{Kovacs10a}. Let $V\leteq
  (X\setminus \Sing X)\setminus\supp\Sigma$. Then $\eta_V:\sF(V)\isom \sG(V)$ is a
  quasi-isomorphism. Then the statement follows from \eqref{cor:splitting-principle}.
\end{proof}

\subsection{The Koll\'ar-Kov\'acs DB criterion}

\begin{thm}\label{thm:kollar-kovacs-improve}
  Let ${f}: Y\to X$ be a proper 
  morphism between reduced schemes of finite type over $\bC$, $W\subseteq X$ an
  arbitrary subscheme, and $F\leteq {f}^{-1}(W)$, equipped with the induced
  reduced subscheme structure. Assume that the natural map $\varrho$
  $$
  \xymatrix{ \sI_{W\subseteq X} \ar[r]_-\varrho & \myR{f}_*\sI_{F\subseteq Y}
    \ar@{-->}@/_1.5pc/[l]_{\varrho'} }
  $$
  admits a left inverse $\varrho'$.
  Then if $(Y,F)$ is a DB pair, then so is $(X,W)$. In particular, if $(Y,F)$ is a DB
  pair, then $X$ is DB if and only if $W$ is DB.
\end{thm}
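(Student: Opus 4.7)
The plan is to derive the theorem from the DB pair criterion \eqref{thm:db-pairs}: it suffices to produce a left inverse in $D(X)$ of the natural morphism $\sI_{W\subseteq X}\to \Om^0_{X,W}$, for then \eqref{thm:db-pairs} immediately yields that $(X,W)$ is a DB pair. The essential ingredient is a commutative square in $D(X)$
$$
\xymatrix{
\sI_{W\subseteq X} \ar[r]^-{\varrho} \ar[d] & \myR f_*\sI_{F\subseteq Y} \ar[d]^{\simeq} \\
\Om^0_{X,W} \ar[r] & \myR f_*\Om^0_{Y,F}
}
$$
whose vertical arrows are the canonical comparison maps. The right vertical arrow is a quasi-isomorphism: since $(Y,F)$ is a DB pair, the map $\sI_{F\subseteq Y}\to \Om^0_{Y,F}$ is itself a quasi-isomorphism, and $\myR f_*$ preserves quasi-isomorphisms.

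Given the square, the left inverse is built as the composition
$$
\Om^0_{X,W}\longto \myR f_*\Om^0_{Y,F} \xleftarrow{\;\simeq\;} \myR f_*\sI_{F\subseteq Y} \xrightarrow{\;\varrho'\;} \sI_{W\subseteq X}.
$$
Its pre-composition with $\sI_{W\subseteq X}\to \Om^0_{X,W}$ equals $\varrho'\circ \varrho = \id$, by the commutativity of the square and the defining property of $\varrho'$; hence \eqref{thm:db-pairs} shows $(X,W)$ is a DB pair. For the \emph{in particular} assertion, compare the canonical morphism of distinguished triangles
$$
\xymatrix{
\sI_{W\subseteq X} \ar[r] \ar[d]_{\simeq} & \sO_X \ar[r] \ar[d] & \sO_W \ar[d] \\
\Om^0_{X,W} \ar[r] & \Om^0_X \ar[r] & \Om^0_W
}
$$
in $D(X)$ (with the shift maps suppressed). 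The leftmost vertical arrow is now known to be a quasi-isomorphism, so the long exact cohomology sequence forces the middle arrow to be a quasi-isomorphism if and only if the right arrow is, i.e., $X$ is DB if and only if $W$ is DB.

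The only non-formal step is the construction of the commutative square in the first paragraph, and this is where I expect the main technical obstacle. However, it follows from the functoriality of the filtered Deligne--Du~Bois construction applied to a pair of mutually compatible good hyperresolutions of $(X,W)$ and $(Y,F)$ covering $f$, in the spirit of \cite[\S 3]{Kovacs10a} and \cite{GNPP88}. Note that with \eqref{thm:db-pairs} already available, the abstract splitting principle \eqref{thm:from-inj-surj-to-isom} is not invoked again here, which is what makes the argument shorter than the original proof of \cite[1.6]{KK10} and which also allows the reverse implication in the \emph{in particular} assertion to be obtained for free.
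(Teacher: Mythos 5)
Your proposal is correct and follows essentially the same route as the paper: the paper's proof likewise forms the commutative square by functoriality, observes that the right vertical arrow is a quasi-isomorphism because $(Y,F)$ is a DB pair, takes $\varrho'\circ\gamma^{-1}\circ\beta$ as the left inverse of $\sI_{W\subseteq X}\to\Om^0_{X,W}$, and concludes by \eqref{thm:db-pairs}. Your explicit triangle comparison for the \emph{in particular} clause is a correct filling-in of a step the paper leaves implicit.
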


\begin{proof}
  By functoriality one obtains a commutative diagram
  $$
  \xymatrix{%
    \sI_{W\subseteq X} \ar[r]^-\varrho \ar[d]_\alpha & \myR{f}_*\sI_{F\subseteq Y} \ar[d]_\gamma^{\qis} \\
    \Om^0_{W\subseteq X} \ar[r]_\beta & \myR{f}_*\Om^0_{Y,F}. }
  $$
  Since $(Y,F)$ is assumed to be a DB pair, it follows that $\gamma$ is a
  quasi-isomorphism and hence $\varrho'\circ\gamma^{-1}\circ \beta$ is a left inverse
  to $\alpha$. Then the statement follows by \eqref{thm:db-pairs}.
\end{proof}

\begin{cor}[\protect{\cite[1.6]{KK10} see
    \eqref{thm:db-criterion}}]\label{sec:kollar-kovacs-original}
    Let ${f}: Y\to X$ be a proper 
  morphism between reduced schemes of finite type over $\bC$, $W\subseteq X$ an
  arbitrary subscheme, and $F\leteq {f}^{-1}(W)$, equipped with the induced
  reduced subscheme structure. Assume that the natural map $\varrho$
  $$
  \xymatrix{ \sI_{W\subseteq X} \ar[r]_-\varrho & \myR{f}_*\sI_{F\subseteq Y}
    \ar@{-->}@/_1.5pc/[l]_{\varrho'} }
  $$
  admits a left inverse $\varrho'$, that is, $\rho'\circ\rho=\id_{\sI_{W\subseteq
      X}}$. Then if $Y,F$, and $W$ all have DB singularities, then so does $X$.
\end{cor}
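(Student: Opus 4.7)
The plan is to derive this corollary directly from the improved theorem \eqref{thm:kollar-kovacs-improve}, which produces the stronger conclusion that $(X,W)$ is a DB pair. The gap to bridge is that \eqref{thm:kollar-kovacs-improve} requires $(Y,F)$ to be a DB pair, whereas here we are only given that $Y$ and $F$ are each DB. The key observation is that these two conditions together already imply that $(Y,F)$ is a DB pair. Once this is established, the improved theorem yields that $(X,W)$ is a DB pair, and then combining with the assumption that $W$ is DB (using the ``in particular'' clause of \eqref{thm:kollar-kovacs-improve}) gives that $X$ is DB.

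First, I would prove the auxiliary claim: if $Y$ is DB and $F\subseteq Y$ is a reduced closed subscheme that is itself DB, then $(Y,F)$ is a DB pair. The short exact sequence
$$0\to \sI_{F\subseteq Y}\to \sO_Y\to \sO_F\to 0$$
yields a distinguished triangle in $D(Y)$, and the Deligne--Du~Bois complex of the pair fits into an analogous distinguished triangle
$$\Om^0_{Y,F}\to \Om^0_Y\to \Om^0_F\xrightarrow{+1}$$
(this is part of the formalism developed in \cite{Kovacs10a}). Naturality of the Deligne--Du~Bois construction provides a morphism of these triangles whose components are the natural maps $\sI_{F\subseteq Y}\to \Om^0_{Y,F}$, $\sO_Y\to \Om^0_Y$, and $\sO_F\to \Om^0_F$. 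The DB assumption on $Y$ and $F$ says that the latter two are quasi-isomorphisms, so by the triangulated 5-lemma the first is as well; that is, $(Y,F)$ is a DB pair.

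With this in hand, the hypotheses of \eqref{thm:kollar-kovacs-improve} are met, so $(X,W)$ is a DB pair. Running the same auxiliary argument in reverse on the triangles
$$\sI_{W\subseteq X}\to \sO_X\to \sO_W\xrightarrow{+1}\qtq{and} \Om^0_{X,W}\to \Om^0_X\to \Om^0_W\xrightarrow{+1},$$
and using that $W$ is DB together with $(X,W)$ being a DB pair, the triangulated 5-lemma forces the map $\sO_X\to \Om^0_X$ to be a quasi-isomorphism. Hence $X$ has DB singularities, as desired.

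The only technical step that requires some care is verifying the existence and naturality of the distinguished triangle $\Om^0_{Y,F}\to \Om^0_Y\to \Om^0_F\xrightarrow{+1}$, but this is built into the definition of $\Om^0_{Y,F}$ via a good hyperresolution of the pair and is recorded in \cite{Kovacs10a}. Apart from this bookkeeping, the argument is entirely formal: a pair of 5-lemma applications sandwiching \eqref{thm:kollar-kovacs-improve}.
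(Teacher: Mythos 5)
Your proposal is correct and matches the paper's (implicit) argument: the paper states this as an immediate corollary of \eqref{thm:kollar-kovacs-improve} without writing out a proof, and the two bridging steps you supply — that $Y$ and $F$ being DB forces $(Y,F)$ to be a DB pair via the morphism of triangles $\sI_{F\subseteq Y}\to\sO_Y\to\sO_F\xrightarrow{+1}$ and $\Om^0_{Y,F}\to\Om^0_Y\to\Om^0_F\xrightarrow{+1}$, and the reverse 5-lemma step recovering that $X$ is DB from $(X,W)$ being a DB pair and $W$ being DB — are exactly the content of the ``in particular'' clause and the pair formalism of \cite{Kovacs10a} that the paper is relying on. No gaps.
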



\def\cprime{$'$} \def\polhk#1{\setbox0=\hbox{#1}{\ooalign{\hidewidth
  \lower1.5ex\hbox{`}\hidewidth\crcr\unhbox0}}} \def\cprime{$'$}
  \def\cprime{$'$} \def\cprime{$'$} \def\cprime{$'$}
  \def\polhk#1{\setbox0=\hbox{#1}{\ooalign{\hidewidth
  \lower1.5ex\hbox{`}\hidewidth\crcr\unhbox0}}} \def\cdprime{$''$}
  \def\cprime{$'$} \def\cprime{$'$} \def\cprime{$'$} \def\cprime{$'$}
\providecommand{\bysame}{\leavevmode\hbox to3em{\hrulefill}\thinspace}
\providecommand{\MR}{\relax\ifhmode\unskip\space\fi MR}
\providecommand{\MRhref}[2]{%
  \href{http://www.ams.org/mathscinet-getitem?mr=#1}{#2}
}
\providecommand{\href}[2]{#2}

\end{document}